\numberwithin{equation}{section}
\numberwithin{figure}{section}
\numberwithin{table}{section}
\newtheorem{theorem}{Theorem}[section]
\newtheorem{lemma}{Lemma}[section]
\numberwithin{equation}{section}
\begin{document}

\begin{center}

{\Large\bf Statistical foundations for assessing the difference between the classical and weighted-Gini betas}

\vspace*{7mm}

{\large Nadezhda Gribkova}

\medskip

\textit{Department of Probability and Mathematical Statistics, St.\,Petersburg State University,
\break St.\,Petersburg 199034, Russia}

\bigskip

{\large Ri\v cardas Zitikis}

\medskip

\textit{School of Mathematical and Statistical Sciences,
Western University, \break London, Ontario N6A 5B7, Canada}

\end{center}

\medskip

\noindent
\textbf{Abstract.}
The `beta' is one of the key quantities in the capital asset pricing model (CAPM). In statistical language, the beta can be viewed as the slope of the regression line fitted to financial returns on the market against the returns on the asset under consideration. The insurance counterpart of CAPM, called the weighted insurance pricing model (WIPM), gives rise to the so-called weighted-Gini beta. The aforementioned two betas may or may not coincide, depending on the form of the underlying regression function, and this has profound implications when designing portfolios and allocating risk capital. To facilitate these tasks, in this paper we develop large-sample statistical inference results that, in a straightforward fashion, imply confidence intervals for, and hypothesis tests about, the equality of the two betas.

\medskip

\noindent
\textit{Key words and phrases}: beta, portfolio construction, capital allocation, weighted Gini, consistency, asymptotic normality.

\medskip
\noindent{\textit{AMS 2010 Subject Classifications:}  Primary: 62G05, 62G20;
Secondary: 62P05, 62P20.}

\newpage

\section{Introduction}
\label{imtro}

The `beta' is one of those classical quantities that we find in virtually every financial engineering text that discusses topics related to the capital asset pricing model (CAPM). For the state-of-the-art appraisal of the area with extensive references, we refer to Levy (2010, 2011). Statistically speaking, the beta is the slope
\begin{equation}
\label{beta}
\beta=\frac{\textbf{Cov}[X,Y]}{\textbf{Cov}[Y,Y]}
\end{equation}
of the least squares regression line, with $X$ denoting the response or dependent variable (such as the return on an asset) and $Y$ is the explanatory or independent variable (such as the return on the market). Of course, we assume that the variance $\textbf{Var}[Y]:=\textbf{Cov}[Y,Y]$ of $Y$ is finite and non-zero, and the covariance $\textbf{Cov}[X,Y]$ between $X$ and $Y$ is finite. (We use `$:=$' when emphasizing that some equalities hold by definition.)

An extension of the CAPM to insurance has turned out to be a complex task due to issues such as skewness and heavy tails of the underlying random variables. Furman and Zitikis (2017) have put forward arguments showing that the task is feasible, and their proposed solution hinges, in part, on the so-called weighted-Gini beta
\begin{equation}
\label{beta_G}
\beta_G=\frac{\textbf{Cov}[X,w\circ F_Y(Y)]}{\textbf{Cov}[Y,w\circ F_Y(Y)]},
\end{equation}
where $F_Y$ is the cumulative distribution function (cdf) of $Y$, $w: [0,1]\to [-\infty,\infty]$ is a weight function (to be discussed below), and $w\circ F_Y$ is the composition of $w$ and $F_Y$. Naturally, we assume the obvious conditions that make the beta well-defined and finite, and we also assume throughout the paper that $w$ is finite on the open interval $(0,1)$. Next are several illustrative examples of $w$ that we use to justify certain technical intricacies employed in this paper.

Examples that we find in the literature usually deal with non-decreasing functions $w$, chosen either based on some regulatory frameworks such as Basel Accords for Banking (e.g., Sawyer, 2012) and Solvency for Insurance (e.g., Sandstr\"{o}m, 2010), or based on other considerations such as economic axioms (e.g., von Neumann and Morgenstern, 1944; Quiggin, 1993; and references therein). For example, the weight function $w_{\mathrm{PHT}}(t)=\nu(1-t)^{\nu-1}$, with PHT standing for the `proportional hazards transform,' has arisen in Insurance when $\nu\in (0,1]$ (Wang, 1995) and Econometrics  when $\nu \ge 1$ (Donaldson and  Weymark, 1980; Kakwani, 1980; see also Zitikis and Gastwirth, 2002; and references therein). The function $w_{\mathrm{CTE}}(t)=\mathbf{1}\{t>\nu \}$ for various parameter values
$\nu\in (0,1)$ arises in contexts associated with the conditional tail expectation (e.g., McNeil et al., 2015; and references therein). In terms of mathematical properties, these functions are quite different: $w_{\mathrm{PHT}}$ is  unbounded for every $\nu\in (0,1)$, continuous on the compact interval $[0,1]$ for every $\nu\ge 1$, whereas the function $w_{\mathrm{CTE}}$ is discontinuous, though bounded, for every $\nu\in (0,1)$. We shall see later in this paper that these features place considerable constraints on the technical apparatus that we can employ, thus occasionally requiring involved arguments in order to accommodate cases such as the aforementioned examples of $w$.

We have organized the rest of the paper as follows. In Section \ref{sect-2} we construct empirical estimators for the two aforementioned betas and discuss their consistency under several complementary sets of conditions. Then we establish asymptotic normality of the difference between the two estimators. Proofs are in Section \ref{sect-4}, and concluding remarks make up Section \ref{summary}. Throughout the paper, we use the following two functions prominently: the conditional-mean function
\[
g_{X\mid Y}(y)=\mathbf{E}[X \mid Y=y]
\]
and the conditional-variance function
\[
v^2_{X\mid Y}(y)=\textbf{Var}[X \mid Y=y],
\]
both defined on the range of $Y$ values. Furthermore, we use $\stackrel{a.s.}{\to}$ to denote convergence almost surely, $\stackrel{\mathbf{P}}{\to}$ convergence in probability, and $\stackrel{law}{\to}$ convergence in law/distribution. We use $c$ to denote various finite constants whose values usually change from line to line.

\section{Main results}
\label{sect-2}

Coming back to definition (\ref{beta_G}) of $\beta_G$, we see that  when the conditional-mean function $g_{X\mid Y}(y)$ is linear on the range of $Y$ values, that is, $g_{X\mid Y}(y)=\alpha + \gamma y$ for some $\alpha, \gamma \in \mathbb{R}$, then $\beta_G=\beta ~(=\gamma)$ irrespective of the weight function $w$. This implies that the weighted insurance pricing model (WIPM) collapses into the classical CAPM, and this feature has been pointed out and utilized by Furman and Zitikis (2010, 2017). For example, in the bivariate Gaussian case, the function $g_{X\mid Y}(y)$ is linear, with the slope equal to $\beta $ given by equation (\ref{beta}). The bivariate elliptical distribution also has a linear regression function (e.g., Furman and Zitikis, 2008, 2017), and so do several bivariate Pareto distributions, though of course not all of them. For details and examples, we refer to Su (2016), Su and Furman (2017), and references therein.

If the linearity of $g_{X\mid Y}(y)$ does not hold, then how far can $\beta_G$ be from $\beta$? This is important to know because if the difference between $\beta_G$ and $\beta$ is not practically significant, even when it is not actually zero, then we can safely ignore the difference and work with the classical beta, for which statistical inference has been well developed in various contexts. This brings us to the main topic of the present paper, namely, the development of statistical tests for assessing the magnitude of
\[
\Delta=\beta_G-\beta.
\]
For this task, we need an empirical estimator for $\Delta $. Let $(X_i,Y_i)$, $i=1,2\dots$, be independent copies of the random pair  $(X,Y)$.  For every integer $n \geq 1$, let $\widehat{F}_Y$ be the empirical estimator of the cdf $F_Y$ defined by
\begin{equation}
\label{new_ecdf}
\widehat{F}_Y (y) = \frac 1{n + 1} \sum_{k=1}^n \mathbf{1}\{Y_k \leq y\},
\end{equation}
where $\mathbf{1}\{Y_k \leq y\}$ is the indicator of the event $\{Y_k \leq y\}$. Note that this empirical estimator slightly differs from the classical empirical cdf because it employs jumps of the size $1/(n+1)$, instead of the usual $1/n$. This adjustment, being not an issue from the asymptotic point of view, is necessary to ensure that all the values of the empirical cdf are located inside the open interval $(0,1)$ on which the weight function $w$ is finite. We are now ready to introduce an empirical estimator of $\Delta$, which is
\begin{equation}
\label{plug_in}
\widehat{\Delta}_n=\widehat{\beta}_{G,n}-\widehat{\beta}_n,
\end{equation}
where
\begin{equation*}
\widehat{\beta}_{G,n} =\frac{\sum_{k=1}^{n}(X_k-\overline{X}) (w\circ \widehat{F}_Y(Y_k)-\overline{z}_0) }{\sum_{k=1}^{n}(Y_k-\overline{Y}) (w\circ \widehat{F}_Y(Y_k)-\overline{z}_0)  }
\end{equation*}
and
\begin{equation*}
\widehat{\beta}_n=\frac{\sum_{k=1}^{n}(X_k-\overline{X}) (Y_k-\overline{Y}) }{\sum_{k=1}^{n}(Y_k-\overline{Y})^2 }
\end{equation*}
with $\overline{X}=n^{-1}\sum_{k=1}^{n}X_k$, $\overline{Y}=n^{-1}\sum_{k=1}^{n}Y_k$, and
$\overline{z}_0=n^{-1}\sum_{k=1}^{n} w\circ \widehat{F}_Y(Y_k)$.

\begin{theorem}
\label{theorem-1}
If the expectations  $\mathbf{E}[X]$, $\mathbf{E}[Y^2]$, and $\mathbf{E}[XY]$ are finite, and the weight function $w$ is continuous on $[0,1]$, then, when $n\to\infty$,
\[
\widehat{\Delta}_n \stackrel{a.s.}{\to} \Delta .
\]
\end{theorem}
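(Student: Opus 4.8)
The plan is to handle the two estimators in $\widehat{\Delta}_n=\widehat{\beta}_{G,n}-\widehat{\beta}_n$ separately, show that each converges almost surely to its population value, and then subtract. For $\widehat{\beta}_n$ nothing beyond the classical theory is needed: by the strong law of large numbers (SLLN), $\overline{X}\to\mathbf{E}[X]$, $\overline{Y}\to\mathbf{E}[Y]$, $n^{-1}\sum_{k=1}^n X_kY_k\to\mathbf{E}[XY]$ and $n^{-1}\sum_{k=1}^n Y_k^2\to\mathbf{E}[Y^2]$ almost surely, so that $n^{-1}\sum_{k=1}^n(X_k-\overline{X})(Y_k-\overline{Y})=n^{-1}\sum_{k=1}^n X_kY_k-\overline{X}\,\overline{Y}\to\textbf{Cov}[X,Y]$ and, likewise, $n^{-1}\sum_{k=1}^n(Y_k-\overline{Y})^2\to\textbf{Var}[Y]$ almost surely. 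Since $\textbf{Var}[Y]\ne 0$ is implicit in the definition of $\beta$, this gives $\widehat{\beta}_n\stackrel{a.s.}{\to}\beta$.

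The substance is the convergence $\widehat{\beta}_{G,n}\stackrel{a.s.}{\to}\beta_G$, where the only real difficulty is the presence of $w\circ\widehat{F}_Y(Y_k)$. Dividing numerator and denominator of $\widehat{\beta}_{G,n}$ by $n$ and expanding, everything reduces to the three averages
\[
n^{-1}\sum_{k=1}^n X_k\,w(\widehat{F}_Y(Y_k)),\qquad n^{-1}\sum_{k=1}^n Y_k\,w(\widehat{F}_Y(Y_k)),\qquad \overline{z}_0=n^{-1}\sum_{k=1}^n w(\widehat{F}_Y(Y_k)),
\]
together with the already-controlled quantities $\overline{X}$ and $\overline{Y}$. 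The first step is to replace $\widehat{F}_Y$ by $F_Y$ inside $w$: by the Glivenko--Cantelli theorem $\sup_y|\widehat{F}_Y(y)-F_Y(y)|\to 0$ almost surely (the jump size $1/(n+1)$ instead of $1/n$ perturbs this supremum by only $O(1/n)$, hence is immaterial), and because $w$ is continuous on the compact interval $[0,1]$ it is uniformly continuous and bounded there; the two facts together yield $\max_{1\le k\le n}\big|w(\widehat{F}_Y(Y_k))-w(F_Y(Y_k))\big|\to 0$ almost surely. The second step is routine: the random variables $w(F_Y(Y_k))$, $k\ge 1$, are i.i.d.\ and bounded, and $\mathbf{E}|X|<\infty$, $\mathbf{E}|Y|<\infty$ (the latter from $\mathbf{E}[Y^2]<\infty$), so the SLLN yields $n^{-1}\sum_{k=1}^n w(F_Y(Y_k))\to\mathbf{E}[w\circ F_Y(Y)]$, $n^{-1}\sum_{k=1}^n X_k\,w(F_Y(Y_k))\to\mathbf{E}[X\,w\circ F_Y(Y)]$ and $n^{-1}\sum_{k=1}^n Y_k\,w(F_Y(Y_k))\to\mathbf{E}[Y\,w\circ F_Y(Y)]$ almost surely.

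It remains to pass from the $F_Y$-versions to the $\widehat{F}_Y$-versions, handled by bounds of the type
\[
\bigg|n^{-1}\sum_{k=1}^n X_k\big(w(\widehat{F}_Y(Y_k))-w(F_Y(Y_k))\big)\bigg|\le\Big(n^{-1}\sum_{k=1}^n|X_k|\Big)\cdot\max_{1\le k\le n}\big|w(\widehat{F}_Y(Y_k))-w(F_Y(Y_k))\big|,
\]
whose first factor converges almost surely to $\mathbf{E}|X|<\infty$ and whose second factor tends to $0$ almost surely by the first step; the same bound with $Y_k$, resp.\ with $1$, in place of $X_k$ handles the other two averages. Collecting everything, the normalized numerator of $\widehat{\beta}_{G,n}$ converges almost surely to $\mathbf{E}[X\,w\circ F_Y(Y)]-\mathbf{E}[X]\,\mathbf{E}[w\circ F_Y(Y)]=\textbf{Cov}[X,w\circ F_Y(Y)]$ and its denominator to $\textbf{Cov}[Y,w\circ F_Y(Y)]$, the latter being nonzero by the standing assumption that $\beta_G$ is well-defined; hence $\widehat{\beta}_{G,n}\stackrel{a.s.}{\to}\beta_G$, and subtracting the limit of $\widehat{\beta}_n$ gives $\widehat{\Delta}_n\stackrel{a.s.}{\to}\Delta$.

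I expect the main obstacle to be the first step --- the uniform replacement of $\widehat{F}_Y$ by $F_Y$ inside a weight function that is only assumed continuous, not Lipschitz. This is precisely where the hypothesis that $w$ is continuous on the \emph{closed} interval $[0,1]$, rather than merely on $(0,1)$, does its work, since continuity on a compact set delivers both uniform continuity and boundedness, and both are used above. For weight functions such as $w_{\mathrm{PHT}}$ with $\nu\in(0,1)$ (unbounded) or $w_{\mathrm{CTE}}$ (discontinuous), this argument fails and a more delicate treatment under complementary conditions is called for.
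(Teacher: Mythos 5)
Your proposal is correct and follows essentially the same route as the paper's proof: reduce everything to the three averages involving $w\circ\widehat{F}_Y(Y_k)$, replace $\widehat{F}_Y$ by $F_Y$ uniformly via Glivenko--Cantelli together with the uniform continuity and boundedness of $w$ on the compact interval $[0,1]$, and finish with the strong law of large numbers and the product bound $\big(n^{-1}\sum_{k}|X_k|\big)\sup_y|w\circ\widehat{F}_Y(y)-w\circ F_Y(y)|$. The only (welcome) difference is that you make explicit the harmless $O(1/n)$ perturbation caused by the $1/(n+1)$ jump size, which the paper leaves implicit.
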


Though simple and elegant, Theorem \ref{theorem-1} does not cover a number of important cases. To see this, we recall the earlier noted weight functions: when $\nu\ge 1$, the function $w_{\mathrm{PHT}}(t)=\nu(1-t)^{\nu-1}$ is continuous on the compact interval $[0,1]$ and thus Theorem \ref{theorem-1} is applicable, but the case $\nu \in (0,1)$, which is of particular interest in Insurance, produces unbounded  $w_{\mathrm{PHT}}$ on $[0,1]$. Thus, we cannot apply Theorem \ref{theorem-1} in the latter case, nor can we apply the theorem in the case of $w_{\mathrm{CTE}}(t)=\mathbf{1}\{t>\nu \}$, which is discontinuous for every $\nu \in (0,1)$. Our next theorem is designed to accommodate cases such as these.

We use $L_q$, $1\leq q\leq \infty$, to denote the set of all Borel-measurable functions $f: [0,1]\to \mathbb{R}$ such that $\|f\|_q :=(\int_0^1 |f|^q\mathrm{d}\lambda)^{1/q}<\infty $ when $1\leq q<\infty$, and $\|f\|_{\infty}:=\mathrm{ess}\sup_{t\in [0,1]}|f(t)|<\infty $ when $q=\infty$, where $\lambda $ is the Lebesgue measure. The quantile function of $Y$ is defined by $F_Y^{-1}(t)= \inf \{ y: F_Y(y) \geq t \}$ for all $t\in (0,1]$.

\begin{theorem}
\label{theorem-2}
Assume that the expectation $\mathbf{E}[Y^2]$ is finite, and the cdf $F_Y$ is continuous. If $\mathbf{E}[(\mathbf{E}[X^2\mid Y])^p]<\infty $ and $w^2\in L_q$ for some $p,q \in [1, \infty ]$ such that $p^{-1}+q^{-1}=1$, then, when $n\to\infty$,
\[
\widehat{\Delta}_n \stackrel{\mathbf{P}}{\rightarrow} \Delta .
\]
\end{theorem}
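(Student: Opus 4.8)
The plan is to treat the two summands of $\widehat{\Delta}_n=\widehat{\beta}_{G,n}-\widehat{\beta}_n$ separately. The classical beta is the easy part: the hypothesis $\mathbf{E}[(\mathbf{E}[X^2\mid Y])^p]<\infty$ forces $\mathbf{E}[X^2]=\mathbf{E}[\mathbf{E}[X^2\mid Y]]<\infty$, and then Cauchy--Schwarz together with $\mathbf{E}[Y^2]<\infty$ gives $\mathbf{E}|XY|<\infty$, so the strong law of large numbers applied to $\overline{X}$, $\overline{Y}$, $n^{-1}\sum X_kY_k$ and $n^{-1}\sum Y_k^2$ yields $\widehat{\beta}_n\stackrel{a.s.}{\to}\beta$. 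Thus everything reduces to $\widehat{\beta}_{G,n}\stackrel{\mathbf{P}}{\to}\beta_G$. Since $\sum_k(X_k-\overline{X})=\sum_k(Y_k-\overline{Y})=0$, the centring constant $\overline{z}_0$ drops out of both the numerator and the denominator of $\widehat{\beta}_{G,n}$, so, writing $N_n=n^{-1}\sum_k(X_k-\overline{X})\,w\!\circ\!\widehat{F}_Y(Y_k)$ and $D_n=n^{-1}\sum_k(Y_k-\overline{Y})\,w\!\circ\!\widehat{F}_Y(Y_k)$, we have $\widehat{\beta}_{G,n}=N_n/D_n$; as the limiting denominator $\textbf{Cov}[Y,w\circ F_Y(Y)]$ is nonzero under the standing assumptions and $(x,y)\mapsto x/y$ is continuous there, it suffices to prove $N_n\stackrel{\mathbf{P}}{\to}\textbf{Cov}[X,w\circ F_Y(Y)]$ and $D_n\stackrel{\mathbf{P}}{\to}\textbf{Cov}[Y,w\circ F_Y(Y)]$, for then $\widehat{\Delta}_n=\widehat{\beta}_{G,n}-\widehat{\beta}_n\stackrel{\mathbf{P}}{\to}\beta_G-\beta=\Delta$.

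Because $F_Y$ is continuous, the probability integral transform gives that $U_k:=F_Y(Y_k)$, $k=1,\dots,n$, are i.i.d.\ uniform on $(0,1)$, that $Y_k=F_Y^{-1}(U_k)$ almost surely, and that $\widehat{F}_Y(Y_k)=\widehat{G}_n(U_k)=R_k/(n+1)$, where $\widehat{G}_n$ is the empirical cdf of $U_1,\dots,U_n$ formed as in (\ref{new_ecdf}) and $R_k$ is the rank of $U_k$ among them. I would then condition on $\mathcal{Y}:=\sigma(Y_1,\dots,Y_n)$. Since $N_n=n^{-1}\sum_k X_k\big(w(R_k/(n+1))-\overline{z}_0\big)$ with the $X_k$ conditionally independent given $\mathcal{Y}$ and $\mathbf{E}[X_k\mid\mathcal{Y}]=g_{X\mid Y}(Y_k)$, $\textbf{Var}[X_k\mid\mathcal{Y}]=v^2_{X\mid Y}(Y_k)$, one gets
\[
\mathbf{E}[N_n\mid\mathcal{Y}]=\frac1n\sum_{k=1}^n g_{X\mid Y}(Y_k)\,w(R_k/(n+1))-\overline{g}\,\overline{z}_0,\qquad
\textbf{Var}[N_n\mid\mathcal{Y}]=\frac1{n^2}\sum_{k=1}^n v^2_{X\mid Y}(Y_k)\big(w(R_k/(n+1))-\overline{z}_0\big)^2,
\]
with $\overline{g}=n^{-1}\sum_k g_{X\mid Y}(Y_k)$. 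For the variance term, taking unconditional expectations, exchangeability reduces $\mathbf{E}\,\textbf{Var}[N_n\mid\mathcal{Y}]$ to $O(1/n)$ times $\mathbf{E}[v^2_{X\mid Y}(Y_1)\,w(R_1/(n+1))^2]$ plus a lower-order piece; using the conditional law of the rank $R_1$ given $Y_1$ and Hölder's inequality with the conjugate exponents $p$ and $q$ one bounds the former by $\mathbf{E}[(\mathbf{E}[X^2\mid Y])^p]^{1/p}$ times a controlled version of $\|w^2\|_q$, so $\mathbf{E}\,\textbf{Var}[N_n\mid\mathcal{Y}]\to0$ and Chebyshev's inequality gives $N_n-\mathbf{E}[N_n\mid\mathcal{Y}]\stackrel{\mathbf{P}}{\to}0$. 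For the mean term, $\overline{g}\stackrel{a.s.}{\to}\mathbf{E}[X]$ by the strong law, and it remains to show $\overline{z}_0\to\int_0^1 w$ and $n^{-1}\sum_k g_{X\mid Y}(Y_k)\,w(R_k/(n+1))\to\mathbf{E}[X\,w\circ F_Y(Y)]$; after replacing $w(R_k/(n+1))$ by the ``oracle'' $w(U_k)$ these follow from the strong law, the integrability $\mathbf{E}|X\,w\circ F_Y(Y)|<\infty$ (via Hölder, using $\mathbf{E}[|w\circ F_Y(Y)|^{2q}]=\int_0^1|w|^{2q}<\infty$) and $\mathbf{E}[g_{X\mid Y}(Y)^2]\le\mathbf{E}[X^2]<\infty$, $\mathbf{E}[(w\circ F_Y(Y))^2]=\int_0^1 w^2<\infty$ being routine. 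Combining these gives $N_n\stackrel{\mathbf{P}}{\to}\textbf{Cov}[X,w\circ F_Y(Y)]$; the same argument with $Y$ in place of $X$ (now using only $\mathbf{E}[Y^2]<\infty$ and Cauchy--Schwarz) gives the corresponding statement for $D_n$, and the reduction of the first paragraph completes the proof.

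The step that carries the real weight is the passage from $w(R_k/(n+1))=w(\widehat{G}_n(U_k))$ to $w(U_k)$, together with the companion bound on $\mathbf{E}[v^2_{X\mid Y}(Y_1)\,w(R_1/(n+1))^2]$. Both hinge on controlling $w$ near the endpoints $0$ and $1$, where it may be unbounded, and in particular on the equally spaced sums $n^{-1}\sum_{j=1}^n w^{2q}(j/(n+1))$: for a general weight with $w^2\in L_q$ these need not converge to $\int_0^1 w^{2q}$, so one cannot simply pass to the integral. The remedy I would use is to split $(0,1)$ into a central part $[\delta,1-\delta]$, on which $w$ is bounded and so $w(\widehat{G}_n(U_k))$ is uniformly close to $w(U_k)$ after approximating $w$ there by a continuous function and invoking the Glivenko--Cantelli theorem ($\sup_u|\widehat{G}_n(u)-u|\stackrel{a.s.}{\to}0$), and two endpoint windows $(0,\delta)\cup(1-\delta,1)$, on which the potentially large values of $w$ are damped by pairing them, via Hölder's inequality with the conjugate exponents $p$ and $q$, against the moment $\mathbf{E}[(\mathbf{E}[X^2\mid Y])^p]$, the factor $1/n$ (resp.\ $1/n^2$) then sending their total contribution to zero as first $n\to\infty$ and then $\delta\to0$. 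This endpoint bookkeeping is precisely why the hypotheses come as a conjugate pair $(p,q)$ rather than a single moment condition, and why the conclusion is convergence in probability rather than almost surely; for the standard monotone weights ($w_{\mathrm{PHT}}$) or step-function weights ($w_{\mathrm{CTE}}$) the relevant equally spaced sums do converge to the corresponding integrals and the argument simplifies considerably.
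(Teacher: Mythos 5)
Your overall architecture coincides with the paper's. The paper also reduces the problem to the convergence of $\overline z_0$, $n^{-1}\sum_k Y_k\,w\circ\widehat F_Y(Y_k)$ and $n^{-1}\sum_k X_k\,w\circ\widehat F_Y(Y_k)$, uses the continuity of $F_Y$ to replace $w\circ\widehat F_Y(Y_{k:n})$ by $w(k/(n+1))$, splits the third sum into a conditional-mean part $n^{-1}\sum_k g_{X\mid Y}(Y_{k:n})w_{k,n}$ and a fluctuation part $n^{-1}\sum_k (X_{[k:n]}-g_{X\mid Y}(Y_{k:n}))w_{k,n}$, and kills the fluctuation part exactly as you do: conditional independence of the induced order statistics given the $Y$'s (Bhattacharya, 1974), Markov/Chebyshev on the conditional second moment, and H\"older with the conjugate pair $(p,q)$ pairing $v^2_{X\mid Y}$ against $w^2$. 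The one genuine divergence is the law-of-large-numbers step for the mean parts: the paper disposes of $n^{-1}\sum_k g_{X\mid Y}(Y_{k:n})w_{k,n}$ and $n^{-1}\sum_k Y_{k:n}w_{k,n}$ in one line by citing van Zwet's (1980) strong law for $L$-statistics with $g_{X\mid Y}\circ F_Y^{-1},\,F_Y^{-1},\,w\in L_2$, whereas you re-derive this by hand via the replacement $w(R_k/(n+1))\rightsquigarrow w(U_k)$ with a central/endpoint split.

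That replacement is where your argument has a real gap. Under the stated hypotheses ($w$ finite on $(0,1)$ and $w^2\in L_q$), $w$ need \emph{not} be bounded on $[\delta,1-\delta]$ (it may blow up at an interior point), and approximating $w$ by a continuous function in $L^1$ or $L^{2q}$ gives no control whatsoever over the point evaluations $w(k/(n+1))$ or $w(\widehat G_n(U_k))$: modifying $w$ on a Lebesgue-null set leaves every integral hypothesis intact but changes these sums arbitrarily. So the difficulty you correctly flag for the equally spaced sums $n^{-1}\sum_j w^{2q}(j/(n+1))$ is not confined to the endpoint bookkeeping --- it sits in the main term of your central region as well, and your sketch does not resolve it. To close the proof you must either import a result of van Zwet's type (which is precisely what it is designed for, and is what the paper does), or add an explicit regularity assumption on $w$ (a.e.\ continuity together with local boundedness, or local bounded variation, on compact subintervals of $(0,1)$ --- satisfied by $w_{\mathrm{PHT}}$ and $w_{\mathrm{CTE}}$), under which your dominated-convergence/Glivenko--Cantelli argument on $[\delta,1-\delta]$ and your H\"older damping near the endpoints do go through. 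In fairness, the paper itself implicitly leans on such regularity when it asserts that $\overline z_0\to\int_0^1 w\,\mathrm{d}\lambda$ ``since $w$ is integrable,'' so your honesty about where the weight of the argument lies is well placed; but as written your route does not prove the theorem from the hypotheses alone, while the cited $L$-statistics machinery is the intended way around it.
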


We note at the outset that the conditions on $X$ and $Y$ in Theorem \ref{theorem-2} are stronger than those in Theorem \ref{theorem-1}, and this is so in order to weaken conditions on the weight function $w$. To see how the two sets of conditions are related, we first note that since $p\ge 1$, the condition $\mathbf{E}[(\mathbf{E}[X^2\mid Y])^p]<\infty $ implies   $\mathbf{E}[X^2]<\infty $, and since we assume $\mathbf{E}[Y^2]$, we have $\mathbf{E}[|XY|]<\infty $, which is one of the moment conditions in Theorem \ref{theorem-1}. Hence, the requirements on the random variables have indeed increased, but very importantly, the requirements on the weight function $w$ in Theorem \ref{theorem-2} have decreased considerably. To see the benefits, we next discuss two extreme cases covered by Theorem \ref{theorem-2}.

First, when $p=1$, we have $q=\infty $, and thus the weight function $w$ must be bounded. This case covers the function $w_{\mathrm{PHT}}(t)=\nu(1-t)^{\nu-1}$ when $\nu\ge 1$, as well as the discontinuous function $w_{\mathrm{CTE}}(t)=\mathbf{1}\{t>\nu \}$. Second, when $p=\infty$, the condition $\mathbf{E}[(\mathbf{E}[X^2\mid Y])^p]<\infty $ means that the conditional-variance function $v^2_{X\mid Y}$ must be bounded. Of course, when $p=1$, then $q=1$ and thus $w^2$ is integrable, which is a very mild assumption on the weight function $w$: the CTE weight function $w_{\mathrm{CTE}}$ is always such, whereas the PHT weight function $w_{\mathrm{PHT}}$ satisfies the requirement when $\nu>1/2$. The latter restriction appears naturally when considering statistical inference for the PHT risk measure (e.g., Jones and Zitikis, 2007; Necir et al. (2007); Necir and Meraghni (2009);  Brahimi et al., 2011; and references therein).

Note also that in Theorem \ref{theorem-2} we assume continuity of $F_Y$ which, though possibly restrictive in some applications, brings tangible benefits into the development of statistical inference. For example, the earlier expression for estimator $\widehat{\beta}_{G,n}$ turns into the following easier manageable expression
\begin{equation}
\label{plug_G}
\widehat{\beta}_{G,n} =\frac{\sum_{k=1}^{n}(X_{[k:n]}-\overline{X}) (w_{k,n}-\overline{z}_0) }{\sum_{k=1}^{n}(Y_{k:n}-\overline{Y}) (w_{k,n}-\overline{z}_0)  } ,
\end{equation}
where
\begin{itemize}
\item
$Y_{1:n}, \dots, Y_{n:n}$ are the (ascending) order statistics of $Y_1, \dots, Y_n$;
\item
$X_{[1:n]}, \dots , X_{[n:n]}$ are the induced order statistics corresponding to $Y_{1:n}, \dots , Y_{n:n}$;
\item
$\overline{z}_0=n^{-1}\sum_{k=1}^{n}w_{k,n}$
with $w_{k,n}= w(k/(n+1))$.
\end{itemize}

Unlike in Theorem \ref{theorem-1}, where we established strong consistency, in Theorem \ref{theorem-2} we deal with (weak) consistency. This shift from strong to weak consistency puts us firmly on the practical path and leads to attractive and highly encompassing conditions, as we have already seen from the two extreme cases analyzed above. On the other hand, when establishing asymptotic normality, which is of our primary interest in the current paper and makes up the contents of the next theorem, we rely only on consistency, and thus our particular focus on this mode of convergence. We use the notations $z_0=\mathbf{E}[w\circ F_Y(Y)]$, $B=\mathbf{Cov}[Y,w\circ F_Y(Y)]$, and  $D=\mathbf{Var}[Y]$.

\begin{theorem}
Let the cdf $F_Y$ be continuous, and let the weight function $w$ be  continuously differentiable except possibly at a finite number of points at which $F^{-1}_Y$ and $g_{X\mid Y}\circ F^{-1}_Y$ must be continuous. Furthermore, assume that there is $b\in [0,1)$ such that the following three assumptions hold:
\begin{enumerate}[{\rm (i)}]
\item  \label{as-1a}
there is $c<\infty $ such that, for all $t\in (0,1)$ where $w'(t)$ exists,
\[
|w(t)|, ~ t(1-t)|w'(t)| \leq c \big ( t(1-t) \big )^{-b/2} ;
\]
\item  \label{as-1b}
the moment $\mathbf{E}[|Y|^{r_1}] $ is finite for some    $r_1>r:=\max\{4,2/(1-b)\}$;
\item \label{as-1c}
there are $\epsilon>0$ and $c<\infty $ such that, for all $t\in (0,1)$,
\[
v^2_{X\mid Y}\circ F_Y^{-1}(t)\leq  c \big ( t(1-t) \big )^{-2/r+\epsilon},
\]
where we have $2/r=\min\{1/2, 1-b\}$.
\end{enumerate}
Then, when $n \to \infty$,
\[
n^{1/2}(\widehat{\Delta}_n-\Delta) \stackrel{law}{\to}\mathcal{N}(0, \Upsilon_1^2+\Upsilon^2_{2}),
\]
where
\begin{equation}
\label{sigma_1main}
\Upsilon_1^2=\int_0^1v^2_{X\mid Y}\circ F_Y^{-1}(t) \bigg( {w(t)-z_0\over B} -{F_Y^{-1}(t)-\mathbf{E}[Y] \over D} \bigg)^2\, \mathrm{d}t
\end{equation}
and
\begin{align}
\Upsilon_2^2
&={1\over B^2} \int_0^1 \int_0^1 \big (w(s)-z_0\big )\big (w(t)-z_0\big )\big (\min(s,t) -st\big ) \,\mathrm{d}H_1(s)\,\mathrm{d}H_1(t)
\notag
\\
&\qquad -{2\over BD} \int_0^1 \int_0^1 \big (w(s)-z_0\big )\big (\min(s,t) -st\big )\,\mathrm{d}H_1(s)\,\mathrm{d}H_2(t)
\notag
\\
&\qquad \qquad +{1\over D^2}\int_0^1 \int_0^1 \big (\min(s,t) -st\big )\,\mathrm{d} H_2(s)\,\mathrm{d}H_2(t)
\label{sigma_2main}
\end{align}
with the functions
\[
H_1(t) =\big ( g_{X\mid Y}\circ F_Y^{-1}(t) -\mathbf{E}[X] \big )-\beta_G \big ( F_Y^{-1}(t)-\mathbf{E}[Y] \big )
\]
and
\[
H_2(t) =\big (g_{X\mid Y}\circ F_Y^{-1}(t)-\mathbf{E}[X] \big )\big ( F_Y^{-1}(t)-\mathbf{E}[Y] \big ) -\beta \big ( F_Y^{-1}(t)-\mathbf{E}[Y] \big )^2.
\]
\label{thm_2main}
\end{theorem}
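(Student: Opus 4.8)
The plan is to reduce the theorem to a joint central limit theorem for the two numerators, obtained after linearising the two ratios. Divided by $n$, the denominators of $\widehat{\beta}_{G,n}$ and $\widehat{\beta}_n$ converge to $B$ and $D$ respectively (the consistency ingredient behind Theorems~\ref{theorem-1}--\ref{theorem-2}), and since $\beta_G B=\mathbf{Cov}[X,w\circ F_Y(Y)]$ and $\beta D=\mathbf{Cov}[X,Y]$, one gets
\[
n^{1/2}(\widehat{\Delta}_n-\Delta)=\frac{1}{B}\,n^{1/2}T_n-\frac{1}{D}\,n^{1/2}S_n+o_{\mathbf{P}}(1),
\]
where, with $Z_k=X_k-\beta_G Y_k$ and using the continuity of $F_Y$ to identify $w\circ\widehat{F}_Y(Y_{k:n})$ with $w_{k,n}=w(k/(n+1))$ (as in \eqref{plug_G}),
\[
T_n=\frac1n\sum_{k=1}^n Z_{[k:n]}w_{k,n}-\overline{Z}\,\overline{z}_0,
\qquad
S_n=\frac1n\sum_{k=1}^n\big(X_kY_k-\beta Y_k^2\big)-\overline{X}\,\overline{Y}+\beta\,\overline{Y}^{2},
\]
and both $T_n,S_n\to0$. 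It remains to prove the joint asymptotic normality of $\big(n^{1/2}T_n,\,n^{1/2}S_n\big)$.

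Next I would split each numerator into a conditional-mean part and a residual part through $X_k=g_{X\mid Y}(Y_k)+\xi_k$, where $\mathbf{E}[\xi_k\mid Y_k]=0$ and $\mathbf{E}[\xi_k^2\mid Y_k=y]=v^2_{X\mid Y}(y)$ (note $Z_k-(g_{X\mid Y}(Y_k)-\beta_G Y_k)=\xi_k$ too). This writes $T_n=T_n^{(1)}+T_n^{(2)}$ with
\[
T_n^{(1)}=\frac1n\sum_{k=1}^n\big(g_{X\mid Y}(Y_k)-\beta_G Y_k\big)\big(w\circ\widehat{F}_Y(Y_k)-\overline{z}_0\big),
\qquad
T_n^{(2)}=\frac1n\sum_{k=1}^n\xi_k\big(w\circ\widehat{F}_Y(Y_k)-\overline{z}_0\big),
\]
the first being a rank statistic in $Y_1,\dots,Y_n$ alone (obtained by reindexing the order-statistic sum by the original labels and using $\widehat{F}_Y(Y_k)=R_k/(n+1)$); and a routine first-order expansion of the $V$-statistic $S_n$ gives $n^{1/2}S_n=\tfrac1{\sqrt n}\sum_k\big(H_2(F_Y(Y_k))+\xi_k(Y_k-\mathbf{E}[Y])\big)+o_{\mathbf{P}}(1)$ with $H_2$ as in the statement and $\mathbf{E}[H_2(F_Y(Y))]=0$ (its influence function $\psi_2(X,Y):=XY-\beta Y^2-\mathbf{E}[XY-\beta Y^2]-\mathbf{E}[Y](X-\mathbf{E}X)+(2\beta\mathbf{E}[Y]-\mathbf{E}[X])(Y-\mathbf{E}[Y])$ splits exactly as $H_2(F_Y(Y))+\xi(Y-\mathbf{E}[Y])$). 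Replacing $w\circ\widehat{F}_Y$ by $w\circ F_Y$ and $\overline{z}_0$ by $z_0$ inside $T_n^{(2)}$ costs only $o_{\mathbf{P}}(n^{-1/2})$ under (i)--(iii), so — crucially because both residuals are driven by the same $\xi_k$ — the residual contributions of $T_n$ and $S_n$ merge into
\[
R_n=\frac1{\sqrt n}\sum_{k=1}^n\xi_k\bigg(\frac{w\circ F_Y(Y_k)-z_0}{B}-\frac{Y_k-\mathbf{E}[Y]}{D}\bigg).
\]

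Conditionally on $\sigma(Y_1,Y_2,\dots)$ the summands of $R_n$ are independent with mean zero, the conditional variance of the sum converging to $\Upsilon_1^2$ of \eqref{sigma_1main}; assumptions (i)--(iii), in particular the balance $2/r=\min\{1/2,1-b\}$ together with $r_1>r$, also furnish the conditional Lyapunov condition, so $R_n\stackrel{law}{\to}\mathcal{N}(0,\Upsilon_1^2)$, and, being conditionally asymptotically normal with a non-random limit, $R_n$ is asymptotically independent of every statistic measurable with respect to the $Y$-sample. The surviving conditional-mean terms $\tfrac1B n^{1/2}T_n^{(1)}$ and $\tfrac1D\tfrac1{\sqrt n}\sum_k H_2(F_Y(Y_k))$ are, up to $o_{\mathbf{P}}(1)$, continuous linear functionals of the single uniform empirical process $\alpha_n(t)=n^{1/2}\big(\widehat{F}_Y\circ F_Y^{-1}(t)-t\big)$: indeed $\tfrac1{\sqrt n}\sum_k H_2(F_Y(Y_k))=-\int_0^1\alpha_n(t)\,\mathrm{d}H_2(t)+o_{\mathbf{P}}(1)$, while $T_n^{(1)}$ is written as a Stieltjes integral against the empirical distribution and a linear functional of $\alpha_n$ is peeled off by integration by parts. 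Here the weighted approximation of $\alpha_n$ by a Brownian bridge — for which the bound $|w(t)|,\ t(1-t)|w'(t)|\le c\,(t(1-t))^{-b/2}$ with $b<1$ in (i) is exactly tailored — together with (ii) controlling $F_Y^{-1}$ at the endpoints and the assumed continuity of $F_Y^{-1}$ and $g_{X\mid Y}\circ F_Y^{-1}$ at the finitely many non-$C^1$ points of $w$ (so that the integrations by parts are legitimate) yields joint convergence to the corresponding functionals of the Brownian-bridge limit $U_0$, namely $\big(\tfrac1B\int_0^1(w(t)-z_0)\,U_0(t)\,\mathrm{d}H_1(t),\ \tfrac1D\int_0^1 U_0(t)\,\mathrm{d}H_2(t)\big)$ with $H_1$ as in the statement (recall $H_1(F_Y(y))=g_{X\mid Y}(y)-\beta_G y-\mathbf{E}[X-\beta_G Y]$). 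Using $\mathbf{E}[U_0(s)U_0(t)]=\min(s,t)-st$, the variance of the difference of these two components is exactly $\Upsilon_2^2$ of \eqref{sigma_2main} (its three lines being $\mathrm{Var}$, $-2\,\mathrm{Cov}$, $\mathrm{Var}$; in particular a double integration by parts against $\min(s,t)-st$ checks that the last line equals $\mathbf{Var}[H_2(F_Y(Y))]$). Adding the independent contributions $\mathcal{N}(0,\Upsilon_1^2)$ from $R_n$ and $\mathcal{N}(0,\Upsilon_2^2)$ from the conditional-mean part gives the asserted $\mathcal{N}(0,\Upsilon_1^2+\Upsilon_2^2)$.

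The main obstacle is the rank-statistic term $T_n^{(1)}$: making its linearisation remainder $o_{\mathbf{P}}(n^{-1/2})$ when $w$ may be unbounded at $0$ and $1$ and only piecewise continuously differentiable, while $F_Y^{-1}$ and $v^2_{X\mid Y}\circ F_Y^{-1}$ are themselves allowed to diverge at the endpoints. This is where the exponent bookkeeping in (i)--(iii) — the choices $r=\max\{4,2/(1-b)\}$, $r_1>r$, and the tail exponent $-2/r+\epsilon$ — is consumed, via weighted empirical-process inequalities of Cs\H{o}rg\H{o}--Cs\H{o}rg\H{o}--Horv\'ath--Mason type; a secondary technical point is verifying the conditional Lyapunov condition for $R_n$ near $0$ and $1$.
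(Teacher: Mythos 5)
Your proposal is correct and follows essentially the same route as the paper: linearize the two ratios via consistency of the denominators, split the result into a residual part driven by $X_k-g_{X\mid Y}(Y_k)$ (handled by a conditional Lindeberg/Lyapunov CLT given the $Y$-sample, yielding $\Upsilon_1^2$) and a conditional-mean part that is an $L$-statistic of the $Y$-sample alone (handled by a Shorack-type Brownian-bridge approximation, yielding $\Upsilon_2^2$), and then combine the two by the asymptotic-independence argument that the paper formalizes with Yang's (1981) lemma. The only organizational differences are that the paper first passes to centered variables in a separate step rather than carrying the empirical centerings through the expansion, and that it keeps $w(k/(n+1))$ in the residual sum instead of replacing $w\circ\widehat{F}_Y$ by $w\circ F_Y$; neither changes the substance.
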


The conditions of Theorem~\ref{thm_2main} are, naturally, stronger than those of Theorem~\ref{theorem-2}. To gain more intuition on them, we next show how the two sets of conditions are related to each other. First, both theorems require continuity of $F_Y$. Furthermore, Theorem~\ref{thm_2main} requires at least the fourth finite moment of $Y$, whereas Theorem~\ref{theorem-2} requires at least the second finite moment of $Y$. This is natural because in the two theorems we deal with consistency and weak convergence of an empirical estimator of the second moment $\mathbf{E}[Y^2]$, which is an integral part of the definition of the classical beta.

To show that the conditions  $\mathbf{E}[(\mathbf{E}[X^2\mid Y])^p]<\infty $ and $w^2\in L_q$ of Theorem~\ref{theorem-2} are implied by conditions (\ref{as-1b}) and (\ref{as-1c}) of Theorem~\ref{thm_2main}, we first note that $\mathbf{E}[(\mathbf{E}[X^2\mid Y])^p]<\infty $ is equivalent to $v^2_{X\mid Y}\circ F_Y^{-1}\in L_p$. It is now easy to check that, for every $b\in [0,1)$, if we set $p=(1+\delta)/(1-b)$ with any  $\delta >0$ such that $\delta<\epsilon/(1-b-\epsilon)$ (we can always assume $\epsilon <1-b$ without loss of generality), then the two conditions of Theorem~\ref{theorem-2} are satisfied.

Furthermore, note that the definition of $\Upsilon_2^2$ implicitly requires that the functions $H_1$ and $H_2$ should be properly defined, which is the case whenever $g_{X\mid Y}\circ F_Y^{-1}$ is in the class of functions of bounded variation on the interval $[\epsilon,1-\epsilon]$ for every $\epsilon>0$. Assumptions (\ref{as-1a})--(\ref{as-1c}) imply that both $\Upsilon_1^2$ and $\Upsilon_2^2$ are finite.

As an illustrative example of Theorem~\ref{thm_2main}, consider the bivariate Gaussian distribution, in which case $g_{X\mid Y}(y)=\mathbf{E}[X] + \beta (y-\mathbf{E}[Y] )$ with the slope $\beta $ defined by equation (\ref{beta}). The corresponding conditional-variance function is $v^2_{X\mid Y}(y)=(1-\rho^2)\textbf{Var}[X]$,
where $\rho =\mathbf{Corr}[X,Y]$ is the correlation coefficient. With the notation $C=\mathbf{Cov}[X,Y]$, we have $v^2_{X\mid Y}(y)=(1-\rho^2)C^2/(D\rho^2)$ and thus
\begin{equation}
\label{sigma_1main-gauss}
\Upsilon_1^2= \bigg ({1\over \rho^2}-1 \bigg ){C^2 \over D} \int_0^1 \bigg( {w(t)-z_0\over B} -{\Phi^{-1}(t) \over \sqrt{D} }  \bigg)^2\, \mathrm{d}t ,
\end{equation}
where $\Phi$ is the standard normal cdf. Since $g_{X\mid Y}(y)$ is linear and $\beta_G=\beta$, we have $\Upsilon_2^2=0$. Consequently, Theorem \ref{thm_2main} says that the asymptotic distribution of $n^{1/2}(\widehat{\Delta}_n-\Delta )$ is $\mathcal{N}(0,\Upsilon_1^2)$.

To construct an empirical estimator for $\Upsilon_1^2$ when it is given by formula (\ref{sigma_1main-gauss}) is not a complex task, as we only need to estimate $\rho$, $B$, $C$, $D$, and $z_0$, which are fairly straightforward tasks. However, Theorem \ref{thm_2main} is not about the bivariate normal distribution -- it is about estimating $\Delta$ when no specific bivariate distribution is assumed. Hence, we need to know the critical values upon which confidence intervals and hypothesis tests would be based, and this requires empirical estimators for $\Upsilon_1^2$ and $\Upsilon_2^2$ defined by equations (\ref{sigma_1main}) and (\ref{sigma_2main}). Though doable with the help of $L$-statistics, it turns out to be a messy task. This prompts us to think of another method for estimating the critical values, and bootstrap is an attractive option (e.g., Chernick and LaBudde, 2011; and references therein). It is well known, however, that bootstrap may not always work (e.g., Athreya, 1987; Bickel et al, 1997; Hall, 1992; Mammen, 1992), but when the underlying asymptotic normality is established (e.g., Hall, 1992; Mammen, 1992), the bootstrap does work. This reveals the value of Theorem \ref{thm_2main} even when its direct use for producing statistical inference has been circumvented by bootstrap, either naive or more advanced, like for example ``$m$ out of $n$'' as in Bickel et al. (1997); see also Gribkova and  Helmers (2007, 2011), and references therein.

We conclude this section with the note that the function $g_{X\mid Y}\circ F_Y^{-1}(t)$ is known in the literature as the quantile-regression function, which also gives rise to the cumulative quantile-regression function $\int_0^u g_{X\mid Y}\circ F_Y^{-1}(t)\mathrm{d} t$. Estimation of these functions in the context of empirical processes was initiated by Rao and Zhao (1995), and then taken over by Tse (2009), who in a series of papers has developed a wide-ranging statistical inference theory. We refer to Tse (2015) for details and further references on the topic. The quantile conditional-variance function $v^2_{X\mid Y}\circ F_Y^{-1}(t)$ also plays a prominent role in the aforementioned works, as it does in the present paper as well.

\section{Proofs}
\label{sect-4}

In this section we prove all the three theorems formulated above.

\subsection{Proof of Theorem \ref{theorem-1}}

Due to the assumed finiteness of moments, we have $\widehat{\beta}_n \stackrel{a.s.}{\to} \beta$  when $n \to \infty$, and so the theorem follows provided that
\begin{equation}
\label{s1}
\overline{z}_0 \stackrel{a.s.}{\to} z_0:=\mathbf{E}[w\circ F_Y(Y)],
\end{equation}
\begin{equation}
\label{s2}
{1\over n}\sum_{k=1}^{n}Y_k w\circ \widehat{F}_Y(Y_k)\stackrel{a.s.}{\to} \mathbf{E}[Yw\circ F_Y(Y)],
\end{equation}
\begin{equation}
\label{s3}
{1\over n}\sum_{k=1}^{n}X_k w\circ \widehat{F}_Y(Y_k)\stackrel{a.s.}{\to} \mathbf{E}[Xw\circ F_Y(Y)].
\end{equation}
Note that all the expectations on the right-hand sides of the above three statements are finite because the weight function $w$ is bounded and the moments $\mathbf{E}[X]$ and  $\mathbf{E}[Y]$ are finite.

In order to prove statement \eqref{s1}, we write
\begin{equation}
\label{proof_s1_2}
\overline{z}_0=  {1\over n}\sum_{k=1}^{n} w\circ F_Y(Y_k) +{1\over n}\sum_{k=1}^{n}\big ( w\circ \widehat{F}_Y(Y_k) - w\circ F_Y(Y_k)\big ).
\end{equation}
The first average on the right-hand side of equation \eqref{proof_s1_2} converges to $z_0$ almost surely by the classical strong law of large numbers, and the absolute value of the second average does not exceed $\sup_{y\in \mathbb{R}}|w\circ \widehat{F}_Y(y)-w\circ F_Y(y)|$, which converges to zero almost surely because of the uniform continuity of $w$ (since it is continuous on the compact interval $[0,1]$) and the classical Glivenko-Cantelli Theorem. This establishes statement \eqref{s1}.

To prove statement \eqref{s2}, we write
 \begin{equation}
\label{s2_p}
{1\over n}\sum_{k=1}^{n}Y_k w\circ \widehat{F}_Y(Y_k)={1\over n}\sum_{k=1}^{n}Y_k w\circ F_Y(Y_k)+{1\over n}\sum_{k=1}^{n}Y_k \big ( w\circ \widehat{F}_Y(Y_k)-w\circ F_Y(Y_k) \big ).
\end{equation}
By the strong law of large numbers, the first average on the right-hand side of equation \eqref{s2_p} converges to $\mathbf{E}[Yw\circ F_Y(Y)]$ almost surely. We are left to show that the second average converges to $0$ almost surely. To this end, we estimate its absolute value from above by
\begin{equation}
\label{s2_p3}
\bigg ( {1\over n}\sum_{k=1}^{n}|Y_k| \bigg ) \sup_{y\in \mathbb{R}}\big |w\circ \widehat{F}_Y(y)-w\circ F_Y(y)\big |.
\end{equation}
By the classical strong law of large numbers, $n^{-1}\sum_{k=1}^{n}|Y_k|$ converges almost surely to $ \mathbf{E}[|Y|]$, which is finite. As already noted above, the supremum on the right-hand side of bound (\ref{s2_p3}) converges to $0$ almost surely. All these facts establish statement \eqref{s2}. The proof of statement \eqref{s3} is virtually identical and thus omitted. Theorem~\ref{theorem-1} follows.

\subsection{Proof of Theorem \ref{theorem-2}}

We reduced the proof of Theorem~\ref{theorem-1} to verifying statements \eqref{s1}--\eqref{s3}. Now we do exactly the same but instead of proving the three statements almost surely, we prove them `in probability.' Note at the outset that since $F_Y$ is continuous, we can replace  $\widehat{F}_Y(Y_{k:n})$ by $k/(n+1)$ and thus both $\widehat{\beta}_{G,n}$ and $\overline{z}_0$ are the same as in equation (\ref{plug_G}). The proof of statement \eqref{s1} is simple: since $w$ is integrable, $\overline{z}_0$ converges to $\int_0^1 w\mathrm{d}\lambda$ when $n\to \infty $, and the latter integral is equal to $\mathbf{E}[w\circ F_Y(Y)]$, which is $z_0$. Statement \eqref{s1} follows.

To prove statement \eqref{s2}, we note that the quantity on the left-hand side is equal to  $n^{-1}\sum_{k=1}^{n}Y_{k:n} w_{k,n}$  almost surely. Since $\mathbf{E}[Y^2]$ is finite, we have $F_Y^{-1}\in L_2$, and we also have $w\in L_2$ because $w^2\in L_q$ for some $q \ge 1$. Hence, statement \eqref{s2}  follows from the strong law of large numbers for $L$-statistics (van Zwet, 1980).

To prove the in-probability version of statement \eqref{s3}, we write
\[
{1\over n}\sum_{k=1}^{n}X_{[k:n]} w_{k,n}={1\over n}T_{n,1}+{1\over n}T_{n,2},
\]
where
\[
T_{n,1}=\sum_{k=1}^{n}g_{X\mid Y}(Y_{k:n}) w_{k,n}
\]
and
\[
T_{n,2}= \sum_{k=1}^{n}\big (X_{[k:n]} -g_{X\mid Y}(Y_{k:n})\big ) w_{k,n} .
\]
We have $\mathbf{E}[(\mathbf{E}[X^2\mid Y])^p]<\infty $ for some $p\ge 1$, and thus $g_{X\mid Y}\circ F_Y^{-1}\in L_2$. This allows us to use the strong law of large numbers for $L$-statistics (van Zwet, 1980;  Corollary~2.1), which implies
\[
{1\over n} T_{n,1} \stackrel{a.s.}{\to} \int_0^1 g_{X\mid Y}\circ F_Y^{-1}w\mathrm{d}\lambda ,
\]
where the integral on the right-hand side is equal to $\mathbf{E}[Xw\circ F_Y(Y)]$. Hence, statement \eqref{s3} follows if
$n^{-1}T_{n,2} \stackrel{\mathbf{P}}{\to} 0$ when $n \to \infty$, which means that, for every $\varepsilon>0$,
\begin{equation}
\label{thm2_p2b}
\mathbf{P}\big(|T_{n,2}| >n\varepsilon \big) \rightarrow 0.
\end{equation}
To prove this statement, we recall (Bhattacharya, 1974) that the induced order statistics $X_{[1:n]}, \dots , X_{[n:n]}$  are conditionally independent, given  $Y_{1:n}, \dots, Y_{n:n}$, and follow the conditional cdf's $F(x\mid Y_{1:n}), \dots , F(x\mid Y_{n:n})$, respectively, where
$F(x\mid y)=\textbf{P}[X\leq x|Y=y]$. An application  of Markov's inequality yields
\begin{align}
\mathbf{P}\big(|T_{n,2}| >n\varepsilon \big)
&=\mathbf{E}\bigg [\mathbf{P}\Big(\Big|\sum_{k=1}^{n}\big (X_{[k:n]} -g_{X\mid Y}(Y_{k:n})\big ) w_{k,n}\Big| >n\varepsilon \Big| Y_1,\dots,Y_n\Big) \bigg ]
\notag
\\
&\le \frac{1}{n^2\varepsilon^2}\sum_{k=1}^{n}\mathbf{E}\Big [\mathbf{E} \Big [\big (X_{[k:n]} -g_{X\mid Y}(Y_{k:n})\big )^2\Big| Y_1,\dots,Y_n\Big] \Big ] w^2_{k,n}
\notag
\\
&= \frac{1}{n^2\varepsilon^2}\sum_{k=1}^{n}\mathbf{E}\big [ v^2_{X\mid Y}(Y_{k:n})\big] w^2_{k,n} .
\label{thm2_p3b}
\end{align}
Next we apply H\"{o}lder's inequality on the right-hand side of bound \eqref{thm2_p3b} and obtain
\begin{align}
\mathbf{P}\big(|T_{n,2}| >n\varepsilon \big)
&\le \frac{1}{n\varepsilon^2}
\bigg(\frac{1}{n}\sum_{k=1}^{n}\Big ( \mathbf{E}\big [ v^2_{X\mid Y}(Y_{k:n})\big]\Big )^p\bigg)^{1/p}  \bigg({1\over n} \sum_{k=1}^{n} |w_{k,n}|^{2q}\bigg)^{1/q}
\notag
\\
&\le  \frac{1}{n\varepsilon^2}
\bigg(\frac{1}{n}\sum_{k=1}^{n}\mathbf{E}\big [ v^{2p}_{X\mid Y}(Y_{k:n})\big]\bigg)^{1/p}  \bigg({1\over n} \sum_{k=1}^{n} |w_{k,n}|^{2q}\bigg)^{1/q}
\notag
\\
&=  \frac{1}{n\varepsilon^2}
\bigg(\frac{1}{n}\sum_{k=1}^{n}\mathbf{E}\big [ v^{2p}_{X\mid Y}\circ F^{-1}_Y (U_{k})\big]\bigg)^{1/p}  \bigg({1\over n} \sum_{k=1}^{n} |w_{k,n}|^{2q}\bigg)^{1/q},
\label{thm2_p5b}
\end{align}
where $U_1,\dots,U_n$ are independent $(0,1)$-uniform random variables.
By the classical law of large number, the first sum on the right-hand side of equation (\ref{thm2_p5b}) convergence to $\int_0^1 v^{2p}_{X\mid Y}\circ F^{-1}_Y (t)\,\mathrm{d}t$, whereas the second sum converges to $\int_0^1 |w(t)|^{2q}\mathrm{d}t$. Both integrals are finite by assumption, and thus statement~\eqref{thm2_p2b} follows. This completes the proof of Theorem \ref{theorem-2}.
\hfill $\Box$

\subsection{Proof of Theorem \ref{thm_2main}}

The proof of Theorem \ref{thm_2main} is involved, and we thus carry it out in several steps. First we show that when deriving the limit distribution of $n^{1/2}(\widehat{\Delta}_n-\Delta)$ we can restrict ourselves to zero-centered variables. This considerably simplifies our following considerations.

\subsubsection{From general to centered rv's}
\label{sect-4transition}

The centered versions of the random variables $X$ and $Y$ are
$\xi =X-\mathbf{E}[X] $ and $\eta=Y-\mathbf{E}[Y] $, respectively, and the centered version of the weight function $w$ is $ w_0(t)=w(t)-z_0 $ with $ z_0=\mathbf{E}[w\circ F_Y(Y)]$. Note that $F_Y(Y)$ is equal to $F_{\eta}(\eta)$ and hence
\begin{equation*}
\Delta
={\mathbf{E}[\xi  w_0\circ F_{\eta}(\eta) ] \over
\mathbf{E}[\eta w_0\circ F_{\eta}(\eta) ]}
-{\mathbf{E}[\xi \eta]
\over \mathbf{E}[\eta^2]} .
 \end{equation*}
Next, let $\xi _k=X_k-\mathbf{E}[X]$ and $\eta_k=X_k-\mathbf{E}[Y]$. With $\widehat{F}_{\eta}(y)=(n+1)^{-1}\sum_{k=1}^{n} \mathbf{1}\{\eta_k\le y\}$, define
\[
 \widetilde{\Delta}_n=\widetilde{\beta}_{G,n}-\widetilde{\beta}_n,
\]
where
\[
\widetilde{\beta}_{G,n} =\frac{\sum_{k=1}^{n}\xi _k
 w_0\circ \widehat{F}_{\eta}(\eta_k) }{\sum_{k=1}^{n}\eta_k
 w_0\circ \widehat{F}_{\eta}(\eta_k) }
\]
and
\[
\widetilde{\beta}_n= \frac{\sum_{k=1}^{n}\xi _k\eta_k }{\sum_{k=1}^{n}\eta_k^2}.
\]
The following theorem implies that the asymptotic distribution of $n^{1/2}(\widehat{\Delta}_n-\Delta)$ is the same as that of  $n^{1/2}(\widetilde{\Delta}_n-\Delta)$, whose asymptotic normality will be established in Section \ref{sect-4centered} below.

\begin{theorem}
\label{theorem-41}
Suppose that either 1) the conditions of Theorem~\ref{theorem-2} are satisfied  or 2) the conditions of Theorem~\ref{theorem-1} are satisfied and the function $w$ is Lipschitz on the interval $[0,1]$. Then, when $n\to\infty$, we have
\begin{gather}
n^{1/2}(\widehat{\beta}_n - \widetilde{\beta}_n)=o_{\mathbf{P}}(1),
\label{beta_n_as}
\\
n^{1/2}(\widehat{\beta}_{G,n} -\widetilde{\beta}_{G,n})=o_{\mathbf{P}}(1),
\label{b-new}
\end{gather}
and thus
$n^{1/2}(\widehat{\Delta}_n-\Delta) =n^{1/2}(\widetilde{\Delta}_n-\Delta)+o_{\mathbf{P}}(1) $.
\end{theorem}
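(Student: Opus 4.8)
The plan is to obtain \eqref{beta_n_as} and \eqref{b-new} from two exact algebraic identities followed by routine order-of-magnitude bookkeeping; the last assertion of the theorem is then immediate. First I would observe that translating the sample by a constant leaves the empirical ranks unchanged, so $\widehat{F}_{\eta}(\eta_k)=\widehat{F}_Y(Y_k)$ for every $k$; setting $z_k:=w\circ\widehat{F}_Y(Y_k)=w\circ\widehat{F}_{\eta}(\eta_k)$ --- which is finite because $\widehat{F}_Y(Y_k)\in\{1/(n+1),\dots,n/(n+1)\}\subset(0,1)$, precisely the reason for the $1/(n+1)$ normalization --- the estimator $\widetilde{\beta}_{G,n}$ is literally $\widehat{\beta}_{G,n}$ with $\overline{X},\overline{Y},\overline{z}_0$ replaced by $\mathbf{E}[X],\mathbf{E}[Y],z_0$, and $\widetilde{\beta}_n$ is $\widehat{\beta}_n$ with $\overline{X},\overline{Y}$ replaced by $\mathbf{E}[X],\mathbf{E}[Y]$. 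Write $a:=\overline{X}-\mathbf{E}[X]$, $b:=\overline{Y}-\mathbf{E}[Y]$, $c:=\overline{z}_0-z_0$, and let $\widehat{N},\widehat{D}$ (resp.\ $\widetilde{N},\widetilde{D}$) denote the numerator and denominator of $\widehat{\beta}_{G,n}$ (resp.\ $\widetilde{\beta}_{G,n}$), and $N_1,D_1$ (resp.\ $N_2=\sum_{k}\xi_k\eta_k$, $D_2=\sum_{k}\eta_k^2$) those of $\widehat{\beta}_n$ (resp.\ $\widetilde{\beta}_n$). Expanding each sum about the true means and using that the centered summands add to zero, one checks that $\widehat{N}=\widetilde{N}-nac$, $\widehat{D}=\widetilde{D}-nbc$, $N_1=N_2-nab$, $D_1=D_2-nb^2$; substituting these into $\widehat{N}/\widehat{D}-\widetilde{N}/\widetilde{D}$ and $N_1/D_1-N_2/D_2$ makes almost everything cancel, leaving
\[
\widehat{\beta}_{G,n}-\widetilde{\beta}_{G,n}=\frac{n\,c\,\big(b\widetilde{N}-a\widetilde{D}\big)}{\widehat{D}\,\widetilde{D}}
\qquad\text{and}\qquad
\widehat{\beta}_n-\widetilde{\beta}_n=\frac{n\,b\,\big(bN_2-aD_2\big)}{D_1\,D_2}.
\]

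Next I would record the orders of magnitude of the quantities appearing here, each of which is a free byproduct of the limits already established in the proofs of Theorems~\ref{theorem-1} and \ref{theorem-2}. Under either hypothesis $\mathbf{E}[Y^2]<\infty$, so $b=O_{\mathbf{P}}(n^{-1/2})$ by the central limit theorem; moreover the same strong (resp.\ weak) laws give $n^{-1}D_1,n^{-1}D_2\to\mathbf{Var}[Y]$, $n^{-1}\widehat{D},n^{-1}\widetilde{D}\to\mathbf{Cov}[Y,w\circ F_Y(Y)]$, $n^{-1}N_2\to\mathbf{Cov}[X,Y]$ and $n^{-1}\widetilde{N}\to\mathbf{Cov}[X,w\circ F_Y(Y)]$, so, the first two limits being nonzero by the standing assumptions, $(D_1D_2)^{-1}=O_{\mathbf{P}}(n^{-2})$, $(\widehat{D}\widetilde{D})^{-1}=O_{\mathbf{P}}(n^{-2})$, $N_2=O_{\mathbf{P}}(n)$, and $\widetilde{N}=O_{\mathbf{P}}(n)$. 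Finally $a=o_{\mathbf{P}}(1)$ by the weak law of large numbers, and $c=o_{\mathbf{P}}(1)$ is exactly the consistency of $\overline{z}_0$, i.e.\ statement \eqref{s1}, shown almost surely under Theorem~\ref{theorem-1} and in probability under Theorem~\ref{theorem-2}.

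The two hypotheses enter only through a sharper rate for one of $a$ or $c$, the identities being arranged so that the missing rate is supplied by the companion factor. If the conditions of Theorem~\ref{theorem-2} hold, then $\mathbf{E}[(\mathbf{E}[X^2\mid Y])^p]<\infty$ forces $\mathbf{E}[X^2]<\infty$, hence $a=O_{\mathbf{P}}(n^{-1/2})$; consequently $b\widetilde{N}-a\widetilde{D}$ and $bN_2-aD_2$ are $O_{\mathbf{P}}(n^{1/2})$, while $nc=o_{\mathbf{P}}(n)$ and $nb=O_{\mathbf{P}}(n^{1/2})$, so $\widehat{\beta}_{G,n}-\widetilde{\beta}_{G,n}$ is $o_{\mathbf{P}}(n^{-1/2})$ and $\widehat{\beta}_n-\widetilde{\beta}_n$ is $O_{\mathbf{P}}(n^{-1})$; multiplying by $n^{1/2}$ yields \eqref{beta_n_as} and \eqref{b-new}. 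If instead the conditions of Theorem~\ref{theorem-1} hold and $w$ is Lipschitz on $[0,1]$ with constant $L$, then $w$ is bounded and
\[
\sup_{y\in\mathbb{R}}\big|w\circ\widehat{F}_Y(y)-w\circ F_Y(y)\big|\le L\sup_{y\in\mathbb{R}}\big|\widehat{F}_Y(y)-F_Y(y)\big|=O_{\mathbf{P}}(n^{-1/2})
\]
by the Dvoretzky--Kiefer--Wolfowitz inequality, while $\big(n^{-1}\sum_{k=1}^n w\circ F_Y(Y_k)\big)-z_0=O_{\mathbf{P}}(n^{-1/2})$ by the central limit theorem ($w\circ F_Y(Y)$ being bounded); adding these two bounds gives $c=O_{\mathbf{P}}(n^{-1/2})$. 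Using now only $a=o_{\mathbf{P}}(1)$, we get $b\widetilde{N}-a\widetilde{D}=o_{\mathbf{P}}(n)$ and $bN_2-aD_2=o_{\mathbf{P}}(n)$, while $nc$ and $nb$ are $O_{\mathbf{P}}(n^{1/2})$, so both $\widehat{\beta}_{G,n}-\widetilde{\beta}_{G,n}$ and $\widehat{\beta}_n-\widetilde{\beta}_n$ are $o_{\mathbf{P}}(n^{-1/2})$ and multiplying by $n^{1/2}$ again yields \eqref{beta_n_as} and \eqref{b-new}. In either case $n^{1/2}(\widehat{\beta}_{G,n}-\widehat{\beta}_n)=n^{1/2}(\widetilde{\beta}_{G,n}-\widetilde{\beta}_n)+o_{\mathbf{P}}(1)$, that is, $n^{1/2}(\widehat{\Delta}_n-\Delta)=n^{1/2}(\widetilde{\Delta}_n-\Delta)+o_{\mathbf{P}}(1)$.

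I expect the only real subtlety to be this case split --- ensuring that whichever of $a$ or $c$ lacks the $n^{-1/2}$ rate is always paired with a correspondingly smaller companion factor --- together with the single quantitative input $c=O_{\mathbf{P}}(n^{-1/2})$ in the Lipschitz case, which rests on the Dvoretzky--Kiefer--Wolfowitz strengthening of Glivenko--Cantelli rather than on its plain form. There is no deep estimate at this stage: the substantive work is postponed to the asymptotic-normality analysis of $n^{1/2}(\widetilde{\Delta}_n-\Delta)$ in Section~\ref{sect-4centered}.
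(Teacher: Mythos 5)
Your proof is correct and follows essentially the same route as the paper's: the exact identities $\widehat{N}=\widetilde{N}-nac$, $\widehat{D}=\widetilde{D}-nbc$, $N_1=N_2-nab$, $D_1=D_2-nb^2$ are precisely the paper's decompositions into $\widetilde{\beta}_n,\widetilde{\beta}_{G,n}$ plus remainders $R_{n,1},\dots,R_{n,4}$, and you handle the case split identically --- pairing $\overline{X}-\mathbf{E}[X]=O_{\mathbf{P}}(n^{-1/2})$ with $\overline{z}_0-z_0=o_{\mathbf{P}}(1)$ under the Theorem~\ref{theorem-2} hypotheses, and $\overline{z}_0-z_0=O_{\mathbf{P}}(n^{-1/2})$ (via the Lipschitz property and the $O_{\mathbf{P}}(n^{-1/2})$ rate of the Kolmogorov statistic) with $\overline{X}-\mathbf{E}[X]=o_{\mathbf{P}}(1)$ under the Theorem~\ref{theorem-1} hypotheses. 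No gaps; only cosmetic differences in bookkeeping.
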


\begin{proof}
To prove statement (\ref{beta_n_as}), we start with the equations
\begin{align}
\label{c_beta_n}
 \widehat{\beta}_n
 &=\frac{\sum_{k=1}^{n}(\xi _k-(\overline{X}-\mathbf{E}[X]))
 (\eta_k-(\overline{Y}-\mathbf{E}[Y]))}
 {\sum_{k=1}^{n}(\eta_k-(\overline{Y}-\mathbf{E}[Y]))^2 }
 \notag
 \\
 &=\frac{n^{-1}\sum_{k=1}^{n}\xi _k\eta_k
 -(\overline{X}-\mathbf{E}[X])(\overline{Y}-\mathbf{E}[Y])}
 {n^{-1}\sum_{k=1}^{n}\eta_k^2 -(\overline{Y}-\mathbf{E}[Y])^2 }
 \notag
 \\
 &=\frac{\widetilde{\beta}_n-R_{n,1}}{1-R_{n,2}},
 \end{align}
 where
\[
\label{r-1-2}
R_{n,1}=\frac{(\overline{X}-\mathbf{E}[X])(\overline{Y}-\mathbf{E}[Y])}
{n^{-1}\sum_{k=1}^{n}\eta_k^2}
\]
and
\[
R_{n,2}
=\frac{(\overline{Y}-\mathbf{E}[Y])^2}{n^{-1}\sum_{k=1}^{n}\eta_k^2}.
\]
Since $\mathbf{E}[X]$ is finite, the strong law of large numbers implies $\overline{X}\stackrel{a.s.}{\to} \mathbf{E}[X] $ when $n\to\infty$. Furthermore, since the second moment $\mathbf{E}[Y^2]$ is finite, we have  $n^{-1}\sum_{k=1}^{n}\eta_k^2\stackrel{a.s.}{\to} \mathbf{Var}[Y]$. The classical central limit theorem implies
$n^{1/2}(\overline{Y}-\mathbf{E}[Y])=O_{\mathbf{P}}(1)$. Hence, both $R_{n,1}$ and $R_{n,2}$ are of the order $o_{\mathbf{P}}(n^{-1/2})$ when $n\to \infty $, and so equation (\ref{c_beta_n}) implies statement (\ref{beta_n_as}).

We now prove statement (\ref{b-new}). For this, we first note that $\widehat{F}_Y(Y_k)$ is equal to $\widehat{F}_{\eta}(\eta_k)$ and  $w\circ \widehat{F}_Y(Y_k)-\overline{z}_0$ is equal to $ w_0\circ \widehat{F}_{\eta}(\eta_k)-(\overline{z}_0-z_0)$. Hence,
\begin{align*}
\widehat{\beta}_{G,n}
&=\frac{\sum_{k=1}^{n}(\xi _k-(\overline{X}-\mathbf{E}[X]))
( w_0\circ \widehat{F}_{\eta}(\eta_k))-(\overline{z}_0-z_0))}
{\sum_{k=1}^{n}(\eta_k-(\overline{Y}-\mathbf{E}[Y])) ( w_0\circ \widehat{F}_{\eta}(\eta_k))-(\overline{z}_0-z_0))}
\\
&=\frac{n^{-1}\sum_{k=1}^{n}\xi _k w_0\circ \widehat{F}_{\eta}(\eta_k))
-(\overline{X}-\mathbf{E}[X])(\overline{z}_0-z_0)}
{n^{-1}\sum_{k=1}^{n}\eta_k  w_0\circ \widehat{F}_{\eta}(\eta_k)) -(\overline{Y}-\mathbf{E}[Y]) (\overline{z}_0-z_0) },
\end{align*}
and so we have the equation
\begin{equation}
\label{c_beta_Gn}
\widehat{\beta}_{G,n} =\frac{\widetilde{\beta}_{G,n}-R_{n,3}}{1-R_{n,4}}
\end{equation}
 where
\[
R_{n,3}=\frac{(\overline{X}-\mathbf{E}[X])(\overline{z}_0-z_0)}
{n^{-1}\sum_{k=1}^{n}\eta_k  w_0\circ \widehat{F}_{\eta}(\eta_k)}
\]
and
\[
R_{n,4}
=\frac{(\overline{Y}-\mathbf{E}[Y])(\overline{z}_0-z_0)  }{n^{-1}\sum_{k=1}^{n}\eta_k  w_0\circ \widehat{F}_{\eta}(\eta_k)}.
\]
Equation \eqref{c_beta_Gn} implies statement (\ref{b-new}) provided that both $R_{n,3}$ and $R_{n,4}$ are of the order $o_{\mathbf{P}}(n^{-1/2})$, which we prove next. Statement $R_{n,4}=o_{\mathbf{P}}(n^{-1/2})$ follows from the following three facts:
\begin{itemize}
\item
$\overline{Y}-\mathbf{E}[Y]=O_{\mathbf{P}}(n^{-1/2})$, which holds by the central limit theorem;
\item
$\overline{z}_0-z_0\stackrel{a.s.}{\to} 0$, which we already proved under the conditions of either Theorem \ref{theorem-1} or Theorem \ref{theorem-2} (or both);
\item
$n^{-1}\sum_{k=1}^{n}\eta_k  w_0\circ \widehat{F}_{\eta}(\eta_k) \stackrel{a.s.}{\to}  \mathbf{Cov}[Y,w\circ F_Y(Y)]\neq 0 $,
which holds due to statements \eqref{s1} and \eqref{s2}, which we already established under the conditions of either Theorem \ref{theorem-1} or Theorem \ref{theorem-2} (or both).
\end{itemize}

The rest of the proof concerns $R_{n,3}$. When the conditions of Theorem~\ref{theorem-2} are satisfied, in which case the second moment $\mathbf{E}[X^2]$ is finite, the central limit theorem implies $\overline{X}-\mathbf{E}[X]=O_{\mathbf{P}}(n^{-1/2})$ and thus $R_{n,3}=o_{\mathbf{P}}(n^{-1/2})$ in an analogous way as that for $R_{n,4}$. Suppose now that the conditions of  Theorem~\ref{theorem-1} are satisfied, in which case we only have the finiteness of the first moment $\mathbf{E}[X]$, which does not allow us to use the central limit theorem for $\overline{X}$; we can only use $\overline{X}-\mathbf{E}[X]\stackrel{a.s.}{\to} 0$ when $n\to \infty$. However, we can now rely on the additional assumption that the weight function $w$ is Lipschitz on $[0,1]$, and so to establish $R_{n,3}=o_{\mathbf{P}}(n^{-1/2})$, we need to show $\overline{z}_0-z_0=O_{\mathbf{P}}(n^{-1/2})$, whose proof we start with the equation
\begin{equation}
\label{r_3 case_2}
\overline{z}_0-z_0=  {1\over n}\sum_{k=1}^{n}\big( w\circ F_Y(Y_k)-z_0\big) +{1\over n}\sum_{k=1}^{n}\big ( w\circ \widehat{F}_Y(Y_k) - w\circ F_Y(Y_k)\big ).
\end{equation}
Since $w$ is bounded, the first average on the right-hand side of equation~\eqref{r_3 case_2} is of the order $O_{\mathbf{P}}(n^{-1/2})$ due to the classical central limit theorem. For the second average to be of the same order, we use the assumption that $w$ is Lipschitz  on $[0,1]$ and write the bounds
\begin{align}
\label{Dvoretzky}
{1\over n}\Big|\sum_{k=1}^{n}\big ( w\circ \widehat{F}_Y(Y_k) - w\circ F_Y(Y_k)\big )\Big|
&\leq c\,{1 \over n}
\sum_{k=1}^{n}\big| \widehat{F}_Y(Y_k) -  F_Y(Y_k)\big|
\notag \\
&\leq c \,\sup_{y\in \mathbb{R}}\big|\widehat{F}_Y(y) -  F_Y(y)\big|.
 \end{align}
The supremum on the right-hand side of equation~\eqref{r_3 case_2}, usually called the Kolmogorov statistic and denoted by $D_n$, is of the order $O_{\mathbf{P}}(n^{-1/2})$. This completes the proof of statement~\eqref{b-new} and thus establishes Theorem \ref{theorem-41}.
\end{proof}

\subsubsection{Asymptotic normality in the case of centered rv's}
\label{sect-4centered}

Let $\xi $ and $\eta $ be two arbitrary random variables such that $\mathbf{E}[\xi ]=0$ and $\mathbf{E}[\eta ]=0$, and let $w_0$ be a function such that $\mathbf{E}[ w_0\circ F_\eta (\eta )]=0$. (As special cases, we may think of $\xi $, $\eta $ and $w_0$ as those introduced in Section \ref{sect-4transition}.) The counterpart of $\Delta$ within the current context is
\[
\delta = \frac{a}{b}-\frac{c}{d},
\]
where $a =\mathbf{E}[\xi  w_0\circ F_\eta (\eta )]$, $b=\mathbf{E}[\eta  w_0\circ F_\eta (\eta )]$, $c =\mathbf{E}[\xi  \eta ]$, and $d=\mathbf{E}[\eta ^2]$. Hence, in the context of the present section, the weighted-Gini beta is $\beta_G=a/b$ and the classical beta is $\beta=c/d$. The empirical estimator of $\delta $ is
\[
\delta_n = \frac{a_n}{b_n} -\frac{c_n}{d_n},
\]
where $a_n =n^{-1}\sum_{k=1}^{n}\xi _k w_0\circ \widehat{F}_\eta (\eta _k)$, $b_n =n^{-1}\sum_{k=1}^{n}\eta _k w_0\circ \widehat{F}_\eta (\eta _k)$,
$c_n =n^{-1}\sum_{k=1}^{n}\xi _k \eta _k$, and $d_n =n^{-1}\sum_{k=1}^{n} \eta _k^2$. Following our earlier notation, we now work with the conditional-mean function $g_{\xi\mid \eta}(y)=\mathbf{E}[\xi \mid \eta =y]$ and the conditional-variance function $v^2_{\xi\mid \eta}(y)=\textbf{Var}[\xi \mid \eta=y]$. Furthermore, $\eta_{1:n}, \dots, \eta_{n:n}$ denote the (ascending) order statistics of $\eta_1, \dots, \eta_n$, and $\xi_{[1:n]}, \dots , \xi_{[n:n]}$ are the corresponding induced order statistics.
Our next theorem establishes asymptotic normality of $n^{1/2}(\delta_n-\delta)$.

\begin{theorem}
\label{thm_2}

Let the cdf $F_\eta$ be continuous, and let the function $w_0$ be  continuously differentiable except possibly at a finite number of points at which  $F^{-1}_\eta$ and $g_{\xi \mid \eta}\circ F^{-1}_\eta$ must be continuous. Furthermore, assume that there is $b\in [0,1)$ such that the following three assumptions hold:
\begin{enumerate}[{\rm (i)}]
\item  \label{as-2a}
there is $c<\infty $ such that, for all $t\in (0,1)$ where $w'_0(t)$ exists,
\[
|w_0(t)|, ~ t(1-t)|w'_0(t)| \leq c \big ( t(1-t) \big )^{-b/2} ;
\]
\item  \label{as-2b}
the moment $\mathbf{E}[|\eta|^{r_1}] $ is finite for some  $r_1>r:=\max\{4,2/(1-b)\}$;
\item \label{as-2c}
there are $\epsilon>0$ and $c<\infty $ such that, for all $t\in (0,1)$,
\[
v^2_{\xi\mid \eta}\circ F_{\eta}^{-1}(t)\leq  c \big ( t(1-t) \big )^{-2/r+\epsilon},
\]
where we have $2/r=\min\{1/2, 1-b\}$.
\end{enumerate}
Then, when $n \to \infty$,
\[
n^{1/2}(\delta_n-\delta)\stackrel{law}{\to}\mathcal{N}(0,\sigma_1^2+\sigma^2_{2}),
\]
where
\begin{equation}
\label{sigma_1}
\sigma_1^2=\int_0^1v^2_{\xi\mid \eta}\circ  F_\eta ^{-1}(t) \bigg( {w_0(t)\over b}  -{F_\eta ^{-1}(t)\over d}  \bigg)^2\, \mathrm{d}t
\end{equation}
and
\begin{multline}
\sigma_2^2={1\over b^2} \int_0^1 \int_0^1  w_0(s) w_0(t)\big (\min(s,t) -st\big )\,\mathrm{d}h_1(s)\,\mathrm{d}h_1(t)
\\
-{2\over bd} \int_0^1 \int_0^1  w_0(s)\big (\min(s,t) -st\big )\,\mathrm{d}h_1(s)\,\mathrm{d}h_2(t)
\\
+{1\over d^2} \int_0^1 \int_0^1 \big (\min(s,t) -st\big )\,\mathrm{d}h_2(s)\,\mathrm{d}h_2(t)
\label{sigma_2}
\end{multline}
with the functions
\[
h_1(t)=g_{\xi\mid \eta}\circ  F_\eta ^{-1}(t)-\beta_G F_\eta ^{-1}(t)
\]
and
\[
h_2(t)=\big (g_{\xi\mid \eta}\circ  F_\eta ^{-1}(t)-\beta F_\eta ^{-1}(t) \big )F_\eta ^{-1}(t).
\]
\end{theorem}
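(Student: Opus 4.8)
\textbf{Proof proposal for Theorem~\ref{thm_2}.}

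The plan is to express $n^{1/2}(\delta_n - \delta)$ as a smooth function of a small number of statistics and then apply the delta method, so the real work is to establish joint asymptotic normality of the vector $n^{1/2}\big((a_n - a, b_n - b, c_n - c, d_n - d)\big)$. Write $\delta_n - \delta = (a_n/b_n - a/b) - (c_n/d_n - c/d)$; since $b_n \stackrel{\mathbf{P}}{\to} b \neq 0$ and $d_n \stackrel{\mathbf{P}}{\to} d \neq 0$ by Theorem~\ref{theorem-2} (its proof already gives consistency of all four components), a one-term Taylor expansion gives
\[
n^{1/2}(\delta_n - \delta) = \frac{1}{b}\,n^{1/2}(a_n - a) - \frac{a}{b^2}\,n^{1/2}(b_n - b) - \frac{1}{d}\,n^{1/2}(c_n - c) + \frac{c}{d^2}\,n^{1/2}(d_n - d) + o_{\mathbf{P}}(1).
\]
So it suffices to find the joint limiting law of the four centered-and-scaled components. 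The components $c_n$ and $d_n$ are ordinary sample means of $\xi_k\eta_k$ and $\eta_k^2$, so their CLT contribution is immediate under the moment assumption (\ref{as-2b}) (note $r_1 > 4$ gives $\mathbf{E}[\eta^4]<\infty$, and assumption (\ref{as-2c}) together with $\mathbf{E}[\eta^2]<\infty$ controls $\mathbf{E}[(\xi\eta)^2]$). The nontrivial components are $a_n$ and $b_n$, which involve the random composition $w_0 \circ \widehat{F}_\eta$.

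The key step is the asymptotic linearization of $a_n$ and $b_n$. Using continuity of $F_\eta$ we may replace $\widehat F_\eta(\eta_k)$ by $k/(n+1)$, so that $b_n = n^{-1}\sum_k \eta_{k:n}\, w_0(k/(n+1))$ is a genuine $L$-statistic in $\eta$, and $a_n = n^{-1}\sum_k \xi_{[k:n]}\,w_0(k/(n+1))$ is an $L$-statistic built from the induced order statistics. For $a_n$ I would decompose $\xi_{[k:n]} = g_{\xi\mid\eta}(\eta_{k:n}) + \big(\xi_{[k:n]} - g_{\xi\mid\eta}(\eta_{k:n})\big)$, exactly as in the proof of Theorem~\ref{theorem-2}. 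The first piece, $n^{-1}\sum_k g_{\xi\mid\eta}(\eta_{k:n})\,w_0(k/(n+1))$, is again an $L$-statistic in $\eta$ with score-generating function essentially $g_{\xi\mid\eta}\circ F_\eta^{-1}\cdot w_0$; its CLT, together with that of $b_n$, $c_n$, $d_n$, is obtained from the general central limit theory for $L$-statistics with smooth-plus-finitely-many-jump weights (the standard route is Hoeffding's projection / the Bahadur representation of sample quantiles, or equivalently the ``integrate the uniform empirical process against $\mathrm{d}H$'' representation), where assumptions (\ref{as-2a}) and (\ref{as-2b}) are precisely the growth and moment conditions guaranteeing the remainder is $o_{\mathbf{P}}(n^{-1/2})$ and the limiting variance is finite. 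This is what produces the functions $h_1, h_2$ and the $\min(s,t)-st$ (Brownian-bridge covariance) kernel in $\sigma_2^2$: writing the linearized $L$-statistics as $\int_0^1 (\text{score})\,\mathrm{d}B_n$ for the uniform empirical process $B_n$, collecting the coefficients $1/b$, $-a/b^2 = -\beta_G/b$, $-1/d$, $c/d^2 = \beta/d$ from the delta method, and noting $a/b = \beta_G$, $c/d=\beta$, the $h$-part combines into $\frac{1}{b}(g_{\xi\mid\eta}\circ F_\eta^{-1})\cdot w_0 - \frac{\beta_G}{b} F_\eta^{-1}\cdot w_0$ for the $w_0$-weighted terms and $\frac{1}{d}(g_{\xi\mid\eta}\circ F_\eta^{-1})F_\eta^{-1} - \frac{\beta}{d}(F_\eta^{-1})^2$ for the unweighted terms, i.e.\ $b^{-1}w_0\,\mathrm{d}h_1$ and $d^{-1}\mathrm{d}h_2$; squaring the Gaussian integral $\int b^{-1}w_0\,\mathrm{d}\tilde B\,\mathrm{d}h_1 - \int d^{-1}\mathrm{d}\tilde B\,\mathrm{d}h_2$ against the bridge $\tilde B$ gives exactly the three displayed double integrals in \eqref{sigma_2}.

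The second, genuinely new piece is $R_n := n^{-1}\sum_k \big(\xi_{[k:n]} - g_{\xi\mid\eta}(\eta_{k:n})\big)\,w_0(k/(n+1))$, the ``conditional-variance'' remainder; I claim $n^{1/2}R_n$ is asymptotically normal with variance exactly $\sigma_1^2$ (after combining with the analogous $F_\eta^{-1}$-weighted remainder coming from $c_n$). Conditionally on $\eta_1,\dots,\eta_n$, the summands $\xi_{[k:n]} - g_{\xi\mid\eta}(\eta_{k:n})$ are independent, mean zero, with conditional variances $v^2_{\xi\mid\eta}(\eta_{k:n})$, by Bhattacharya's (1974) result on induced order statistics already cited in the proof of Theorem~\ref{theorem-2}. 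So $n^{1/2}R_n$ is a sum of a conditionally independent triangular array; a conditional Lindeberg CLT applies, with conditional variance $n^{-1}\sum_k v^2_{\xi\mid\eta}(\eta_{k:n})\,w_0(k/(n+1))^2 \to \int_0^1 v^2_{\xi\mid\eta}\circ F_\eta^{-1}(t)\,w_0(t)^2\,\mathrm{d}t$. Crucially, this remainder is \emph{conditionally independent of} (and hence asymptotically independent of) the $\eta$-measurable linearized parts above, which is why $\sigma_1^2$ and $\sigma_2^2$ add with no cross term; combining the $a_n$-remainder (coefficient $1/b$, weight $w_0$) with the $c_n$-remainder (coefficient $-1/d$, weight $F_\eta^{-1}$) yields the integrand $\big(w_0(t)/b - F_\eta^{-1}(t)/d\big)^2 v^2_{\xi\mid\eta}\circ F_\eta^{-1}(t)$ of \eqref{sigma_1}. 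Finally one checks finiteness: assumption (\ref{as-2c}) with the bound $v^2_{\xi\mid\eta}\circ F_\eta^{-1}(t) \le c(t(1-t))^{-2/r+\epsilon}$, against $w_0^2(t) \le c(t(1-t))^{-b}$ and $(F_\eta^{-1}(t))^2$ (integrable to order $r_1/2 > 2/(1-b)$ by (\ref{as-2b}) and a quantile-tail bound), makes $\sigma_1^2 < \infty$; the same growth-vs-moment bookkeeping, now applied to the Lebesgue–Stieltjes integrals $\mathrm{d}h_1,\mathrm{d}h_2$ over $[\epsilon,1-\epsilon]$ with the contributions near $0$ and $1$ controlled by (\ref{as-2a})–(\ref{as-2b}), makes $\sigma_2^2 < \infty$ and justifies that the endpoint truncation is negligible.

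\textbf{Main obstacle.} The hard part is the uniform control of the two remainder-type errors near the endpoints $t = 0, 1$, where $w_0$ (and possibly $F_\eta^{-1}$) blows up: showing that the $L$-statistic linearization error for $a_n, b_n$ is $o_{\mathbf{P}}(n^{-1/2})$, and that the contribution of the extreme order statistics (the $\eta_{k:n}$ with $k/n$ near $0$ or $1$, where $w_{k,n}$ is largest) to both the $L$-statistic part and the conditional-variance remainder $R_n$ is asymptotically negligible. This is exactly where the carefully matched exponents $-b/2$ in (\ref{as-2a}), $r_1 > \max\{4, 2/(1-b)\}$ in (\ref{as-2b}), and $-2/r+\epsilon$ in (\ref{as-2c}) are needed, and the estimates require splitting each sum into a ``central'' block $\epsilon n \le k \le (1-\epsilon)n$ (handled by the smooth $L$-statistic theory) and two ``tail'' blocks whose $L_1$ or $L_2$ moments are bounded using quantile-transform tail inequalities for order statistics (e.g.\ $\mathbf{E}[|\eta_{k:n}|^{r_1}] \le c\,(k/n)^{?}$-type bounds) combined with the weight growth. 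Once the tails are shown negligible and the central block is linearized, assembling the pieces and reading off $\sigma_1^2, \sigma_2^2$ is bookkeeping.
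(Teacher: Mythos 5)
Your proposal follows essentially the same route as the paper's proof: your four-term delta-method linearization is algebraically identical to the paper's reduction to $L_n=n^{1/2}\bigl(b^{-1}(a_n-\beta_G b_n)-d^{-1}(c_n-\beta d_n)\bigr)$, and you then split off the same conditional-variance remainder (handled by Bhattacharya's conditional independence plus a conditional Lindeberg CLT, giving $\sigma_1^2$) from the same $\eta$-measurable $L$-statistic part (handled by the Shorack-type Brownian-bridge representation, giving $\sigma_2^2$), with the additivity of the two variances justified by exactly the conditional-independence argument the paper formalizes via Yang's (1981) lemma. The approach is correct and matches the paper's.
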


\begin{proof}
We start with the representation
\begin{align*}
n^{1/2}(\delta_n-\delta)
&= n^{1/2}\left(\frac{a_n}{b_n} -\frac{a}{b}-\frac{c_n}{d_n}+\frac{c}{d}\right)
\\
&= n^{1/2}\left( \frac{1}{b_n}\left( a_n -\frac{b_n}{b}a\right)-\frac{1}{d_n}\left( c_n-\frac{d_n}{d}c\right)\right).
 \end{align*}
Consequently, since $b_n\stackrel{\mathbf{P}}{\to} b$ and $d_n\stackrel{\mathbf{P}}{\to} d$ when $n \to \infty$, the asymptotic distribution of $ n^{1/2}(\delta_n-\delta)$ is the same as that of
\begin{align}
L_n:=&n^{1/2}\left( \frac{1}{b}\left( a_n -\frac{b_n}{b}a\right)-\frac{1}{d}\left( c_n-\frac{d_n}{d}c\right)\right)
\notag
\\
=& n^{1/2}\left( \frac{1}{b}\bigl( a_n -\beta_G b_n\bigr)-\frac{1}{d}\bigl( c_n-\beta d_n\bigr)\right).
\label{clt_15}
\end{align}
Hence, we next prove that the asymptotic distribution of $L_n$ is $\mathcal{N}(0,\sigma_1^2+\sigma^2_{2})$ and in this way complete the proof of Theorem \ref{thm_2}.

Since the cdf $F_{\eta}$ is continuous, $ w_0(\widehat{F}_\eta (\eta _{k:n}))$ is equal to $w_{0,k,n}:=w_0(k/(n+1))$, and so
\begin{align*}
L_n&\stackrel{law}{=}\frac 1{n^{1/2}}\left( {1\over b}\sum_{k=1}^{n}\Big( \xi _{[k:n]} -\beta_G\,\eta _{k:n}\Big) w_{0,k,n} - {1\over d} \sum_{k=1}^{n}\Big( \xi _{[k:n]}\eta _{k:n}-\beta\,\eta _{k:n}^2 \Big) \right)
\\
&=W_n + T_n,
\end{align*}
where
\begin{equation}
\label{term-w}
W_n=\frac 1{n^{1/2}}\sum_{k=1}^{n} \Bigl(\xi _{[k:n]} -g_{\xi\mid \eta}(\eta _{k:n})\Bigr) \bigg( {1\over b}  w_{0,k,n} - {1\over d} \eta _{k:n} \bigg)
\end{equation}
and
\begin{align}
T_n=&\frac 1{n^{1/2}}\sum_{k=1}^{n} g_{\xi\mid \eta}(\eta _{k:n}) \bigg( {1\over b} w_{0,k,n} - {1\over d} \eta _{k:n} \bigg) -
\frac 1{n^{1/2}}\sum_{k=1}^{n}\bigg( {1\over b} \beta_G \eta _{k:n}  w_{0,k,n} -{1\over d} \beta \eta _{k:n}^2\bigg)
\notag
\\
=&\frac 1{n^{1/2}}\sum_{k=1}^{n}\bigg({1\over b}  w_{0,k,n}\Bigl(g_{\xi\mid \eta}(\eta _{k:n})-\beta_G \eta _{k:n}\Bigl) -{1\over d} \Bigl(g_{\xi\mid \eta}(\eta _{k:n})-\beta \eta _{k:n}\Bigl)\eta _{k:n}\bigg).
\label{term-t}
\end{align}
Consequently, we need to show that, when $n \to \infty$,
the random sum $W_n +T_n$ is asymptotically $\mathcal{N}(0,\sigma_1^2+\sigma^2_{2})$, and we rely on Yang (1981) when establishing this result. Namely, we first show (Lemma \ref{yang-1} in Section \ref{auxlemmas} below) that the distribution of $W_n$ conditioned on $\boldsymbol{\eta }_n $ tends to $\mathcal{N}(0,\sigma_1^2)$ for almost all sequences  $(\eta _m)_{m\ge 1}$, and the limiting distribution does not depend on the sequence $(\eta _m)_{m\ge 1}$. Then we prove (Lemma \ref{yang-2} in Section \ref{auxlemmas} below) that $T_n$  is asymptotically $\mathcal{N}(0,\sigma^2_{2})$. Given these two results, we use the following   lemma to conclude that the joint distribution of $(W_n,T_n)$ converges to the product of the two aforementioned normal distributions.  As a special case of this joint convergence of $(W_n,T_n)$, we conclude that $W_n+T_n$ is asymptotically $\mathcal{N}(0,\sigma_1^2+\sigma^2_{2})$, as claimed above. This finishes the proof of Theorem~\ref{thm_2}
\end{proof}

\begin{lemma}[Yang, 1981]
\label{lem_yang}
Let $(\xi _1,\eta _1),(\xi _2,\eta _2),\dots$ be a sequence of random pairs and, for every $n\geq 1$, the first $n$ pairs $(\xi _1,\eta _1),\dots,(\xi _n,\eta _n)$ possess a joint distribution. Denote $\boldsymbol{\zeta}_n= ((\xi _1,\eta _1),\dots,(\xi _n,\eta _n))$ and $\boldsymbol{\eta}_n= (\eta _1,\dots,\eta _n)$, and let $\textbf{W}_n(\boldsymbol{\zeta}_n)$ and $\textbf{T}_n(\boldsymbol{\eta}_n)$ be measurable vector-valued functions of $\boldsymbol{\zeta}_n$ and $\boldsymbol{\eta}_n$, respectively. Suppose that the asymptotic distribution of $\textbf{T}_n$ is $F_T$, and the conditional distribution of $\textbf{W}_n$ given $\boldsymbol{\eta}_n$ is $F_W$, which is assumed not to depend on the $\eta _k$'s. Then the asymptotic distribution of $(\textbf{W}_n,\textbf{T}_n)$ is the product $F_W F_T$.
\end{lemma}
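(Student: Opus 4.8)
The plan is to prove Lemma~\ref{lem_yang} by reducing joint weak convergence to convergence of conditional characteristic functions, exploiting the fact that $\textbf{T}_n$ is $\boldsymbol{\eta}_n$-measurable while the conditional law of $\textbf{W}_n$ given $\boldsymbol{\eta}_n$ does not depend on the realized values of the $\eta_k$'s. Write $\varphi_{\textbf{W}}$ for the characteristic function of the limit law $F_W$ and $\varphi_{\textbf{T}}$ for that of $F_T$. For fixed real vectors $s,t$ of the appropriate dimensions, I would like to show
\[
\mathbf{E}\big[\exp\big(i\langle s,\textbf{W}_n\rangle + i\langle t,\textbf{T}_n\rangle\big)\big]\longrightarrow \varphi_{\textbf{W}}(s)\,\varphi_{\textbf{T}}(t),
\]
which by L\'evy's continuity theorem gives $(\textbf{W}_n,\textbf{T}_n)\stackrel{law}{\to}F_W\otimes F_T$.

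The key step is the conditioning identity: since $\textbf{T}_n$ is a function of $\boldsymbol{\eta}_n$,
\[
\mathbf{E}\big[e^{i\langle s,\textbf{W}_n\rangle + i\langle t,\textbf{T}_n\rangle}\big]
=\mathbf{E}\Big[e^{i\langle t,\textbf{T}_n\rangle}\,\mathbf{E}\big[e^{i\langle s,\textbf{W}_n\rangle}\,\big|\,\boldsymbol{\eta}_n\big]\Big].
\]
Denote the inner conditional characteristic function by $\psi_n(s;\boldsymbol{\eta}_n):=\mathbf{E}[e^{i\langle s,\textbf{W}_n\rangle}\mid \boldsymbol{\eta}_n]$. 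The hypothesis that the conditional distribution of $\textbf{W}_n$ given $\boldsymbol{\eta}_n$ converges to $F_W$ for almost every sequence $(\eta_m)_{m\ge 1}$, with a limit not depending on that sequence, means precisely that $\psi_n(s;\boldsymbol{\eta}_n)\to \varphi_{\textbf{W}}(s)$ almost surely. Then I would split
\[
\mathbf{E}\big[e^{i\langle t,\textbf{T}_n\rangle}\,\psi_n(s;\boldsymbol{\eta}_n)\big]
=\varphi_{\textbf{W}}(s)\,\mathbf{E}\big[e^{i\langle t,\textbf{T}_n\rangle}\big]
+\mathbf{E}\big[e^{i\langle t,\textbf{T}_n\rangle}\big(\psi_n(s;\boldsymbol{\eta}_n)-\varphi_{\textbf{W}}(s)\big)\big].
\]
The first term converges to $\varphi_{\textbf{W}}(s)\,\varphi_{\textbf{T}}(t)$ by the assumed weak convergence $\textbf{T}_n\stackrel{law}{\to}F_T$. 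For the second term, the integrand is bounded in modulus by $|\psi_n(s;\boldsymbol{\eta}_n)-\varphi_{\textbf{W}}(s)|\le 2$ and tends to zero almost surely, so dominated convergence forces the term to vanish.

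The main obstacle here is not any deep estimate but the measure-theoretic bookkeeping: one must be careful that $\psi_n(s;\cdot)$ is a genuine (measurable) version of the conditional characteristic function, that the almost-sure convergence of conditional laws translates correctly into almost-sure convergence of $\psi_n(s;\boldsymbol{\eta}_n)$ for each fixed $s$ (this uses separability of the space of characteristic functions in the topology of pointwise convergence on a countable dense set of arguments $s$, together with L\'evy's theorem applied pathwise), and that the exceptional null set can be chosen uniformly in $s$ by taking a countable dense set. Once this is in place, the dominated-convergence argument is immediate, and since the displayed limit factorizes over all $s$ and $t$, the limiting law of $(\textbf{W}_n,\textbf{T}_n)$ is the product $F_W F_T$, completing the proof.
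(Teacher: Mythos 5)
Your argument is correct, but note that the paper itself offers no proof of this lemma: it is stated as an imported result, attributed to Yang (1981), and is used as a black box in the proof of Theorem \ref{thm_2}. What you have written is essentially the standard (and Yang's own) argument: condition on $\boldsymbol{\eta}_n$, use that $\textbf{T}_n$ is $\boldsymbol{\eta}_n$-measurable to factor the joint characteristic function into $\mathbf{E}\bigl[e^{i\langle t,\textbf{T}_n\rangle}\,\psi_n(s;\boldsymbol{\eta}_n)\bigr]$, replace $\psi_n$ by its almost-sure limit $\varphi_{\textbf{W}}(s)$ via bounded convergence, and invoke L\'evy's continuity theorem. All steps are sound.

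One small remark on the ``measure-theoretic bookkeeping'' you flag as the main obstacle: it is less of an obstacle than you suggest. The hypothesis, as it is actually used in the paper (see Lemma \ref{yang-1}), is that for almost every realization of the sequence $(\eta_m)_{m\ge 1}$ the conditional laws of $\textbf{W}_n$ converge weakly to $F_W$. This already provides a single exceptional null set, outside of which the conditional characteristic functions converge to $\varphi_{\textbf{W}}(s)$ simultaneously for \emph{every} $s$, since weak convergence of measures implies pointwise convergence of characteristic functions at all arguments. So no separability or countable-dense-set argument is needed to make the null set uniform in $s$; that device would only be required if the hypothesis were phrased the other way around (for each $s$, almost-sure convergence of $\psi_n(s;\cdot)$). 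With that simplification, your proof is complete as written.
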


\subsubsection{Two auxiliary lemmas}
\label{auxlemmas}

In this section we deal with $W_n$ defined by equation (\ref{term-w}) and also with $T_n$ defined by equation (\ref{term-t}).

\begin{lemma}
The distribution of $W_n$ conditioned on $\boldsymbol{\eta }_n $ converges to $\mathcal{N}(0,\sigma_1^2)$ for almost all sequences  $(\eta _m)_{m\ge 1}$, and the limiting distribution does not depend on the sequence $(\eta _m)_{m\ge 1}$.
\label{yang-1}
\end{lemma}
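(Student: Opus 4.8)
The plan is to prove a conditional central limit theorem for $W_n$, treating it (after conditioning on $\boldsymbol\eta_n$) as a sum of independent but non-identically distributed zero-mean random variables and verifying a Lindeberg-type condition. Recall from (\ref{term-w}) that
\[
W_n=\frac 1{n^{1/2}}\sum_{k=1}^{n} \Bigl(\xi _{[k:n]} -g_{\xi\mid \eta}(\eta _{k:n})\Bigr) c_{k,n},
\qquad
c_{k,n}:={1\over b}  w_{0,k,n} - {1\over d} \eta _{k:n}.
\]
By the Bhattacharya (1974) representation already invoked in the proof of Theorem~\ref{theorem-2}, conditionally on $\boldsymbol\eta_n$ the summands $\xi_{[k:n]}-g_{\xi\mid\eta}(\eta_{k:n})$, $k=1,\dots,n$, are independent with conditional mean $0$ and conditional variance $v^2_{\xi\mid\eta}(\eta_{k:n})$. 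Hence, conditionally on $\boldsymbol\eta_n$, the random variable $W_n$ is a sum of independent mean-zero terms with conditional variance
\[
s_n^2:=\frac1n\sum_{k=1}^n v^2_{\xi\mid\eta}(\eta_{k:n})\,c_{k,n}^2
=\frac1n\sum_{k=1}^n v^2_{\xi\mid\eta}(\eta_{k})\Bigl({1\over b}w_0(\widehat F_\eta(\eta_k))-{1\over d}\eta_k\Bigr)^2 .
\]

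The first step is to show $s_n^2\stackrel{a.s.}{\to}\sigma_1^2$. Write $s_n^2$ as a sum of an $L$-statistic-type term, obtained by replacing $\widehat F_\eta(\eta_k)$ by $k/(n+1)$ (legitimate since $F_\eta$ is continuous), plus a remainder. For the leading term one applies the strong law of large numbers for $L$-statistics (van Zwet, 1980), exactly as in the proof of Theorem~\ref{theorem-2}; the required integrability of the score $v^2_{\xi\mid\eta}\circ F_\eta^{-1}(t)\bigl(w_0(t)/b-F_\eta^{-1}(t)/d\bigr)^2$ over $(0,1)$ is precisely the finiteness of $\sigma_1^2$, which assumptions (\ref{as-2a})--(\ref{as-2c}) guarantee by the growth-rate bookkeeping indicated after Theorem~\ref{thm_2main} (the product of the bounds $(t(1-t))^{-b}$, $(t(1-t))^{-2/r+\epsilon}$ and the contribution of $F_\eta^{-1}$ controlled via $\mathbf E[|\eta|^{r_1}]<\infty$ with $r_1>r$ is integrable). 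The remainder is handled as in the proof of Theorem~\ref{theorem-41}: using $|w_0(s)-w_0(t)|\le c\,|s-t|\,(\cdots)^{-b/2}$ away from the fixed exceptional points, together with $\sup_y|\widehat F_\eta(y)-F_\eta(y)|=O_{\mathbf P}(n^{-1/2})$ and an a.s.\ rate of uniform convergence, one shows it tends to $0$ a.s.

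The second step is the conditional Lindeberg condition: for every $\varepsilon>0$,
\[
\frac1{n s_n^2}\sum_{k=1}^n
\mathbf E\!\left[\bigl(\xi_{[k:n]}-g_{\xi\mid\eta}(\eta_{k:n})\bigr)^2\,
\mathbf 1\{|\xi_{[k:n]}-g_{\xi\mid\eta}(\eta_{k:n})|\,|c_{k,n}|>\varepsilon n^{1/2}s_n\}\,\Big|\,\boldsymbol\eta_n\right]\stackrel{a.s.}{\to}0 .
\]
The natural route is to pass to a Lyapunov condition: bound the left side, up to constants, by $n^{-\rho/2}s_n^{-2-\rho}\,n^{-1}\sum_k \mathbf E[|\xi_{[k:n]}-g_{\xi\mid\eta}(\eta_{k:n})|^{2+\rho}\mid\boldsymbol\eta_n]\,|c_{k,n}|^{2+\rho}$ for some small $\rho>0$, and then show that the $n^{-1}\sum_k(\cdots)$ factor stays a.s.\ bounded (indeed converges) while the $n^{-\rho/2}$ prefactor kills it. Controlling $\sum_k|c_{k,n}|^{2+\rho}/n$ requires a strong law for $L$-statistics with the integrand $|w_0(t)/b-F_\eta^{-1}(t)/d|^{2+\rho}$, and controlling the conditional absolute moments of order $2+\rho$ requires the corresponding tail bound on $\xi$; here one must choose $\rho$ small enough that these $(2+\rho)$-moments are finite, which is where assumptions (\ref{as-2a})--(\ref{as-2c}) and $r_1>r=\max\{4,2/(1-b)\}$ are used with a bit of room to spare (the strict inequality $r_1>r$ and the $\epsilon>0$ in (\ref{as-2c}) provide exactly this slack). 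Once $s_n^2\to\sigma_1^2>0$ a.s.\ and the Lindeberg condition hold, the Lindeberg--Feller CLT gives that the conditional law of $W_n$ given $\boldsymbol\eta_n$ converges to $\mathcal N(0,\sigma_1^2)$ for almost every sequence $(\eta_m)_{m\ge1}$, and the limit manifestly does not depend on that sequence, which is the assertion of the lemma.

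The main obstacle I anticipate is the bookkeeping in the two $L$-statistic limits near the endpoints $0$ and $1$: one must simultaneously juggle the blow-up of $w_0$ and $w_0'$ controlled by $b$, the blow-up of $v^2_{\xi\mid\eta}\circ F_\eta^{-1}$ controlled by $2/r-\epsilon$, and the growth of $F_\eta^{-1}$ controlled by the moment $\mathbf E[|\eta|^{r_1}]<\infty$, and verify that the relevant products are integrable with a margin large enough to absorb the extra exponent $\rho$ in the Lyapunov step. The $\widehat F_\eta$-versus-$(k/(n+1))$ replacement and the remainder estimate, though routine in spirit, also need care precisely at the finitely many points where $w_0$ fails to be $C^1$, which is why the hypothesis forces $F_\eta^{-1}$ and $g_{\xi\mid\eta}\circ F_\eta^{-1}$ to be continuous there.
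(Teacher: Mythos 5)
Your setup is the same as the paper's: Bhattacharya's (1974) conditional-independence representation, the conditional variance $V_n^2=n^{-1}\sum_k v^2_{\xi\mid\eta}(\eta_{k:n})c_{k,n}^2$, its a.s.\ convergence to $\sigma_1^2$ via van Zwet's strong law for $L$-statistics, and a conditional Lindeberg--Feller argument. Two remarks before the main point. First, the ``remainder'' you introduce when replacing $\widehat F_\eta(\eta_{k:n})$ by $k/(n+1)$ is vacuous: continuity of $F_\eta$ makes $\widehat F_\eta(\eta_{k:n})=k/(n+1)$ an almost-sure identity (no ties), which is exactly how $W_n$ is written in (\ref{term-w}); there is nothing to estimate there. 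Second, and this is the genuine gap: your plan to verify Lindeberg by passing to a Lyapunov condition of order $2+\rho$ cannot work under the stated hypotheses. That step requires the conditional absolute moments $\mathbf{E}\bigl[|\xi_{[k:n]}-g_{\xi\mid\eta}(\eta_{k:n})|^{2+\rho}\mid\boldsymbol\eta_n\bigr]$ to be finite and controlled, but assumptions (\ref{as-2a})--(\ref{as-2c}) say nothing about any conditional moment of $\xi$ beyond the second: (\ref{as-2b}) is a moment condition on $\eta$ only, and (\ref{as-2c}) bounds only the conditional variance $v^2_{\xi\mid\eta}$. The ``slack'' provided by $r_1>r$ and by $\epsilon>0$ lives entirely on the $\eta$ and $w_0$ side of the product and cannot manufacture a $(2+\rho)$-th conditional moment of $\xi$; for a distribution of $X$ given $Y=y$ with finite variance but infinite $(2+\rho)$-th moment, your Lyapunov sum is identically $+\infty$ while the lemma is still true.

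The paper's proof avoids this by verifying the Lindeberg condition itself, using only second conditional moments. It shows that the truncation levels $\theta_{k,n}=\varepsilon n^{1/2}V_n/|c_{k,n}|$ tend to infinity almost surely \emph{uniformly in $k$} --- this is where (\ref{as-2a}) enters, since $\max_k|w_{0,k,n}|\le c(n+1)^{b/2}$ with $b/2<1/2$, and $\max_k|\eta_k|=o(n^{1/2})$ a.s.\ because $\eta$ has more than two finite moments. Consequently, for any fixed $K$, the Lindeberg sum is eventually dominated by $n^{-1}\sum_k c_{k,n}^2 h_K(\eta_{k:n})$ with $h_K(y)=\int(x-g_{\xi\mid\eta}(y))^2\mathbf{1}\{|x-g_{\xi\mid\eta}(y)|\ge K\}\,\mathrm{d}F(x\mid y)$, which by van Zwet's strong law converges a.s.\ to $\int_0^1(w_0(t)/b-F_\eta^{-1}(t)/d)^2\,h_K\circ F_\eta^{-1}(t)\,\mathrm{d}t$; letting $K\to\infty$ kills this limit by dominated convergence since $h_K\le v^2_{\xi\mid\eta}$. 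If you replace your Lyapunov step by this truncated-second-moment argument (or, alternatively, add an explicit hypothesis bounding a $(2+\rho)$-th conditional moment of $\xi$, which would prove a weaker theorem), the rest of your outline goes through.
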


\begin{proof}
Using Bhattacharya's (1974) result already utilized in the proof of Theorem \ref{theorem-2}, we have
$\mathbf{E}[W_n \mid \boldsymbol{\eta }_n]=0$ with the conditional variance $V_n^2:=\textbf{Var}[W_n\mid \boldsymbol{\eta }_n]$ expressed by
\[
V_n^2=\frac 1n\sum_{k=1}^{n}v^2_{\xi\mid \eta}(\eta _{k:n})\bigg({1\over b}  w_{0,k,n}-{1\over d} \eta _{k:n}\bigg)^2.
\]
Lindeberg's normal-convergence criterion implies that, when $n\to \infty $, the conditional distribution of $W_n/V_n$ is asymptotically $\mathcal{N}(0,1)$ if, for every $\varepsilon>0$ and when $n\to \infty $,
\begin{equation}
\label{clt_20}
\frac 1{nV_n^2}\sum_{k=1}^{n}\bigg ({1\over b} w_{0,k,n} -{1\over d} \eta _{k:n}\bigg)^2 h_{\theta_{k,n}}(\eta _{k:n}) \rightarrow 0
\end{equation}
for almost all realizations of the sequence $(\eta _m)_{m\ge 1}$, where
\begin{equation}
\label{clt_21}
h_{\theta_{k,n}}(y)=\int (x-g_{\xi\mid \eta}(y))^2 \mathbf{1}\{|x-g_{\xi\mid \eta}(y)|\geq \theta_{k,n}\} \mathrm{d}F(x\mid y)
\end{equation}
with the notation
\[
\theta_{k,n}=\varepsilon n^{1/2} V_n / \left| {1\over b}w_{0,k,n} -{1\over d} \eta _{k:n} \right|.
\]
Under conditions (\ref{as-2a})--(\ref{as-2c}), the strong law of large numbers for $L$-statistics (van Zwet, 1980) implies
\begin{equation}
\label{clt_23}
V_n^2  \stackrel{a.s.}{\to} \int_0^1v^2_{\xi\mid \eta}\circ  F_\eta ^{-1}(t) \bigg ( {1\over b} w_0(t)-{1\over d} F_\eta ^{-1}(t) \bigg )^2\, \mathrm{d}t .
\end{equation}
Note that the integral on the right-hand side is equal to \ $\sigma_1^2 $. To verify  $\theta_{k,n}\stackrel{a.s.}{\to}\infty$, we write the bounds
\begin{align*}
\theta_{k,n}
&\geq \varepsilon n^{1/2} V_n / \left( {1\over b} \max_{k=1,\dots,n}| w_{0,k,n}|+{1\over d} \max_{k=1,\dots,n}|\eta _k| \right)
\notag
\\
&\geq \varepsilon n^{1/2} V_n / \left( {c\over b} \max_{k=1,\dots,n}
\bigg ( {k(n-k+1)\over (n+1)^2} \bigg )^{-b/2} +{1\over d} \max_{k=1,\dots,n}|\eta _k| \right)
\notag
\\
&= \varepsilon n^{1/2} V_n / \left( {c\over b}
(n+1)^{b/2} +{1\over d} \max_{k=1,\dots,n}|\eta _k| \right),
\end{align*}
where we used assumption (\ref{as-2a}). Since $b<1$, we have $b/2<1/2$. Furthermore, since there are at least two finite moments of $\eta $, we have $\max_{k=1,\dots,n}|\eta _k|/n^{1/2}\stackrel{a.s.}{\to}0$. Consequently, $\theta_{k,n}\stackrel{a.s.}{\to} \infty $ as required. Applying the strong law of large numbers for $L$-statistics (van Zwet, 1980), we have, for every $K>0$ and when $n\to \infty $,
\[
\frac 1{n}\sum_{k=1}^{n}\bigg({1\over b}  w_{0,k,n} -{1\over d} \eta _{k:n}\bigg)^2 h_{K}(\eta _{k:n}) \stackrel{a.s.}{\to}
\int_0^1 \bigg({1\over b}  w_0(t)-{1\over d} F_\eta ^{-1}(t)\bigg)^2 h_K\circ F_\eta ^{-1}(t)\, \mathrm{d}t.
\]
The function $h_K(y)$ is defined by equation (\ref{clt_21}), but now with $K$ instead of $\theta_{k,n}$. The just established statement together with statement \eqref{clt_23} verify Lindeberg's criterion for almost all realizations of the sequence $(\eta _m)_{m\ge 1}$ and hence the conditional distribution of $W_n/V_n$ given $\boldsymbol{\eta }_n$ converges to $\mathcal{N}(0,1)$ almost surely. This concludes the proof of Lemma \ref{yang-1}.
\end{proof}

\begin{lemma}
The distribution of $T_n$ converges to $\mathcal{N}(0,\sigma^2_{2})$ when $n\to \infty $.
\label{yang-2}
\end{lemma}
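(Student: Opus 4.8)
The plan is to recognize $T_n$ in (\ref{term-t}) as an $L$-statistic built from the bivariate sample $(\xi_k,\eta_k)$, linearize it around its population analogue, and then apply a central limit theorem for such statistics. Writing $\psi(y)=g_{\xi\mid\eta}\circ F_\eta^{-1}(y)$ and recalling $h_1(t)=\psi(t)-\beta_G F_\eta^{-1}(t)$ and $h_2(t)=\bigl(\psi(t)-\beta F_\eta^{-1}(t)\bigr)F_\eta^{-1}(t)$, one rewrites
\[
T_n=\frac 1{n^{1/2}}\sum_{k=1}^{n}\Bigl({1\over b}\,w_{0,k,n}\,h_{1,n}(k)-{1\over d}\,h_{2,n}(k)\Bigr),
\]
where $h_{j,n}(k)$ denotes the value of $h_j$-type quantities evaluated at $\eta_{k:n}$ and $k/(n+1)$. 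The first step is therefore to replace the score values $w_{0,k,n}=w_0(k/(n+1))$ and the order-statistic arguments $\eta_{k:n}$ by their smooth population counterparts, namely $w_0(\widehat F_\eta(\eta_k))$ and $F_\eta^{-1}(\widehat F_\eta(\eta_k))$, controlling the errors using the continuity of $F_\eta$ and the Chibisov–O'Reilly type tail bounds granted by assumptions (\ref{as-2a})--(\ref{as-2c}); in particular assumption (\ref{as-2b}), which forces $r_1>4$, gives the moment room needed to handle $\eta_{k:n}^2$ terms.

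The second step is the linearization proper. Each of the three pieces of $T_n$ is, up to smaller-order remainders, a smooth function of the empirical process $\sqrt n(\widehat F_\eta-F_\eta)$; equivalently, each is an $L$-statistic with a weight function that is continuously differentiable off finitely many points, and one invokes the asymptotic-normality theory for $L$-statistics (Chernoff–Gastwirth–Johns / van Zwet / Shorack–Wellner), whose conditions are exactly of the moment-and-growth type assumed in (\ref{as-1a})--(\ref{as-1c}). The upshot is an i.i.d. representation
\[
T_n=\frac 1{n^{1/2}}\sum_{k=1}^{n}\Psi(\eta_k)+o_{\mathbf P}(1),
\]
where $\Psi(\eta_k)=\int_0^1\bigl(\mathbf 1\{\eta_k\le F_\eta^{-1}(t)\}-t\bigr)\,\mathrm dJ(t)$ with $J$ the combined integrand
\[
\mathrm dJ(t)={1\over b}\,w_0(t)\,\mathrm dh_1(t)-{1\over d}\,\mathrm dh_2(t).
\]
The classical CLT for i.i.d. summands then yields $T_n\stackrel{law}{\to}\mathcal N(0,\mathbf{Var}[\Psi(\eta_1)])$, and the final step is the bookkeeping computation
\[
\mathbf{Var}[\Psi(\eta_1)]=\int_0^1\!\!\int_0^1\bigl(\min(s,t)-st\bigr)\,\mathrm dJ(s)\,\mathrm dJ(t),
\]
which on expanding $\mathrm dJ$ into its $h_1$ and $h_2$ parts reproduces exactly the three double integrals of $\sigma_2^2$ in (\ref{sigma_2}) — one $b^{-2}$ term in $\mathrm dh_1\,\mathrm dh_1$, a cross term $-2(bd)^{-1}$ in $\mathrm dh_1\,\mathrm dh_2$, and one $d^{-2}$ term in $\mathrm dh_2\,\mathrm dh_2$; that $\sigma_2^2$ is finite is guaranteed by the bounded-variation remark and assumptions (\ref{as-1a})--(\ref{as-1c}) already recorded in the text.

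I expect the main obstacle to be the first step: justifying the passage from the discrete $L$-statistic (with weights $w_0(k/(n+1))$ and arguments $\eta_{k:n}$, and with the unbounded weight functions $w_{\mathrm{PHT}}$ in mind) to the smooth functional of the empirical process, uniformly in the tails. This is where the delicate balance encoded in the exponents — $|w_0(t)|\le c(t(1-t))^{-b/2}$, $\mathbf E|\eta|^{r_1}<\infty$ with $r_1>r=\max\{4,2/(1-b)\}$, and $v^2_{\xi\mid\eta}\circ F_\eta^{-1}(t)\le c(t(1-t))^{-2/r+\epsilon}$ — has to be used in full: the product of the weight growth and the quantile growth must stay integrable against the Brownian bridge near $0$ and $1$, and the finitely many exceptional points of $w_0'$ (at which, by hypothesis, $F_\eta^{-1}$ and $\psi$ are continuous) must be excised by a standard truncation-and-$\epsilon$ argument. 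Once this tail control is in place, the remaining CLT and variance-matching are routine.
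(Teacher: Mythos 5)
Your argument is correct in substance and arrives at the right variance, but it takes a genuinely different route from the paper. The paper follows Shorack (1972) directly: after passing to uniform order statistics it couples the centred statistic $T_n-n^{1/2}\mu_n/b$ with the Gaussian variable $S=\frac{1}{b}\int_0^1 w_0(t)\mathcal{B}(t)\,\mathrm{d}h_1(t)-\frac{1}{d}\int_0^1\mathcal{B}(t)\,\mathrm{d}h_2(t)$ built from Shorack's special Brownian bridge, shows the difference is $o_{\mathbf{P}}(1)$, and inherits normality from $S$, whose variance is exactly \eqref{sigma_2}. You instead take the projection (Chernoff--Gastwirth--Johns) route: an i.i.d.\ representation $T_n=n^{-1/2}\sum_k\Psi(\eta_k)+o_{\mathbf{P}}(1)$ with $\Psi$ the centred-indicator integral against $\mathrm{d}J=\frac{1}{b}w_0\,\mathrm{d}h_1-\frac{1}{d}\,\mathrm{d}h_2$, followed by the classical CLT; your bookkeeping $\mathbf{Var}[\Psi(\eta_1)]=\int_0^1\int_0^1(\min(s,t)-st)\,\mathrm{d}J(s)\,\mathrm{d}J(t)$ does reproduce the three terms of \eqref{sigma_2}. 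The two routes carry the same technical burden, namely controlling the approximation near $t=0,1$ where $w_0$ may grow like $(t(1-t))^{-b/2}$ against the tails of $F_\eta^{-1}$; the paper discharges this by invoking Shorack's arguments, and you would discharge it by verifying the hypotheses of the corresponding $L$-statistic CLT, so neither proof is more detailed there. Two points you should tighten: first, replacing $w_{0,k,n}$ by $w_0(\widehat F_\eta(\eta_{k:n}))$ is an exact identity for continuous $F_\eta$, whereas replacing $\eta_{k:n}$ by $F_\eta^{-1}(k/(n+1))$ is a genuine quantile-process approximation that must be estimated, so your ``first step'' conflates a triviality with the real work; second, your i.i.d.\ representation silently centres $T_n$ at zero, which requires showing that the discrete bias $n^{1/2}\int_0^1 w_n(t)h_1(t)\,\mathrm{d}t$ (with $w_n$ the step-function version of the weights) vanishes --- the paper treats this explicitly at the end of its proof via the smoothness assumption on $w_0'$ (Shorack 1972, p.~416), and it should not be omitted from yours.
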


\begin{proof}
We use the approach of Shorack (1972) to tackle $T_n$. Namely, let $U_1,U_2,\dots$ be a sequence of independent $(0,1)$-uniformly distributed random variables, and let $U_{1:n},\dots,U_{n:n}$, denote the order statistics based on the first $n$ members of the sequence. Then we have
\[
T_n\stackrel{law}{=} {1\over b} T_{n,1}-{1\over d}T_{n,2},
\]
where
\[
T_{n,1}=\frac 1{n^{1/2}}\sum_{k=1}^{n} w_{0,k,n} h_1(U_{k:n})
\]
and
\[
T_{n,2}=\frac 1{n^{1/2}}\sum_{k=1}^{n} h_2(U_{k:n}).
\]
Note that $\mathbf{E}[ w_0(U)h_1(U)]=0$ and $\mathbf{E}[h_2(U)]=0$ with $U$ denoting the $(0,1)$-uniformly distributed random variable. Hence, by the strong law of large numbers, we have $n^{-1/2} T_{n,1} \stackrel{a.s.}{\to} 0$ and  $n^{-1/2} T_{n,2} \stackrel{a.s.}{\to} 0$ when $n\to \infty $. We next prove that the asymptotic distribution of $T_n -n^{1/2}\mu_n/b $ is $\mathcal{N}(0,\sigma^2_{2})$, where
\[
\mu_n:=\int_0^1 w_n(t)h_1(t)\, \mathrm{d}t
\]
with $w_n(t) = w_{0,k,n}$ for all $t\in ((k-1)/n, k/n]$ and every $1\leq k\leq n$, and $ w_n(t)=w_{0,1,n}$ when $t=0$.

Let $\mathcal{B}$ be the special Brownian bridge that appears in Shorack~(1972), defined on a special probability space; the change of probability spaces does not affect the current proof. Imitating the form of $T_n$, we define $S$ by the equation
\[
S= \frac{1}{b} S_1 -\frac{1}{d}S_2 ,
\]
where
$S_1= \int_0^1  w_0(t)\mathcal{B}(t)\, \mathrm{d}h_1(t)$
and
$S_2=\int_0^1 \mathcal{B}(t)\, \mathrm{d}h_2(t)$.
Next we write the equation
\begin{equation*}
\label{clt_30}
\bigg ( T_n -\frac{n^{1/2}}b\mu_n \bigg ) -S={1\over b} \Big ( (T_{n,1}-n^{1/2}\mu_n)-S_1\Big ) -{1\over d} \big ( T_{n,2} -S_2 \big ).
\end{equation*}
Repeating the arguments of Shorack (1972), both $(T_{n,1}-n^{1/2}\mu_n)-S_1$ and $T_{n,2} -S_2$ converge to $0$ in probability. Hence, the asymptotic distribution of $T_n -n^{1/2}\mu_n/b $ must be $\mathcal{N}(0,\sigma^2_{2})$ because the distribution of $S$ is $\mathcal{N}(0,\sigma^2_{2})$. We conclude our considerations with the note that, due to assumption~(\ref{as-2a}) on the derivative $ w_0'$,  the quantity $\mu_n$ in $T_n -n^{1/2}\mu_n/b $ can (cf. Shorack, 1972, p.~416) be replaced by $\int_0^1  w_0(t)h_1(t)\, \mathrm{d}t$, which is equal to $0$.

Hence, in summary, the asymptotic distribution of $T_n $ is $\mathcal{N}(0, \sigma^2_{2})$, and this concludes the proof of Lemma \ref{yang-2}.
\end{proof}

\section{Summary}
\label{summary}

In this paper we have developed an asymptotic theory that enables to statistically assess the magnitude of the difference $\Delta $ between the classical and weighted-Gini betas. The former beta has played a~pivotal role in the construction of financial portfolios for several decades, whereas the latter beta has recently arisen in the context of insurance portfolios and risk-capital allocations. Specifically, in this paper we have constructed an estimator for $\Delta $ and derived its consistency and asymptotic normality, which can be used for constructing confidence intervals for, and hypothesis tests about, the difference $\Delta $.

\section*{Acknowledgement}

Research of the second author has been supported by the Natural Sciences and Engineering Research Council of Canada.

\section*{References}
\def\hang{\hangindent=\parindent\noindent}

\hang
Athreya, K.B. (1987).
Bootstrap of the mean in the infinite variance case.
Annals of Statistics, 15, 724--731.

\hang
Bhattacharya, P.K. (1974).
Convergence of sample paths of normalized sum of induced order statistics.
Annals of Statistics, 2, 1034--1039.

\hang
Bickel, P.J., G\"otze, F. and van Zwet, W.R. (1997).
Resampling fewer than  $n$ observations: gains, losses, and remedies for losses.
Statistica Sinica, 7, 1--31.

\hang
Brahimi, B., Meraghni, D., Necir, A. and Zitikis, R. (2011).
Estimating the distortion parameter of the proportional-hazard premium for heavy-tailed losses.
Insurance: Mathematics and Economics, 49, 325--334.

\hang
Chernick, M.R. and LaBudde, R.A. (2011).
An Introduction to Bootstrap Methods with Applications to R.
Wiley, New York.

\hang
Donaldson, D. and  Weymark, J.A. (1980).
A single-parameter generalization of the Gini indices of inequality.
Journal of Economic Theory,  22, 67--86.

\hang
Furman, E. and Zitikis, R. (2008).
Weighted risk capital allocations.
Insurance: Mathematics and Economics, 43, 263-269.

\hang
Furman, E. and Zitikis, R. (2009).
Weighted pricing functionals with application to insurance: an overview.
North American Actuarial Journal, 13, 483--496.

\hang
Furman, E. and Zitikis, R. (2010).
General Stein-type covariance decompositions with
applications to insurance and finance.
ASTIN Bulletin, 40, 369--375.

\hang
Furman, E. and Zitikis, R. (2017).
Beyond the Pearson correlation: heavy-tailed risks, weighted Gini correlations, and a Gini-type weighted insurance pricing model.
ASTIN Bulletin, 47, 919--942.

\hang
Gribkova, N.V. and  Helmers, R. (2007).
On the Edgeworth expansion and the $M$ out of  $N$ bootstrap accuracy  for a Studentized trimmed mean.
Mathematical Methods of Statistics, 16, 142--176.

\hang
Gribkova, N.V. and Helmers, R. (2011).
On the consistency of the $M \ll N$ bootstrap approximation for a trimmed mean.
Theory of Probability and Its Applications, 55, 42--53.

\hang
Hall, P. (1992).
The Bootstrap and Edgeworth Expansion.
Springer, New York.

\hang
Jones, B.L. and Zitikis, R. (2007).
Risk measures, distortion parameters, and their empirical estimation.
Insurance: Mathematics and Economics,
41, 279--297.

\hang
Kakwani, N.C. (1980).
Income Inequality and Poverty: Methods of Estimation and Policy Applications.
Oxford University Press, New York.

\hang
Levy, H. (2010).
The CAPM is alive and well: a review and synthesis.
European Financial Management, 16, 43--71.

\hang
Levy, H. (2011).
The Capital Asset Pricing Model in the 21st Century:
Analytical, Empirical, and Behavioral Perspectives.
Cambridge University Press, Cambridge.

\hang
McNeil, A.J., Frey, R. and Embrechts, P. (2015).
Quantitative Risk Management: Concepts, Techniques and Tools. (Revised Edition.)  Princeton University Press, Princeton, NJ.

\hang
Mammen, E. (1992).
When Does Bootstrap Work? Asymptotic Results and Simulations.
Springer, New York.

\hang
Necir, A. and Meraghni, D. (2009). Empirical estimation of the proportional hazard premium for heavy-tailed claim amounts. Insurance: Mathematics and  Economics,  45, 49--58.

\hang
Necir, A., Meraghni, D., Meddi, F. (2007). Statistical estimate of the proportional hazard premium of loss. Scandinavian Actuarial Journal,  2007, 147--161.

\hang
Petrov, V.V. (1975).
Sums of Independent Random Variables.
Springer, New York.

\hang
Quiggin, J. (1993).
Generalized Expected Utility Theory.
Kluwer, Dordrecht.

\hang
Rao, C.R. and Zhao, L.C. (1995).
Convergence theorems for empirical cumulative quantile regression function.
Mathematical Methods of Statistics, 4, 81--91.

\hang
Sandstr\"{o}m, A. (2010).
Handbook of Solvency for Actuaries and Risk Managers: Theory and Practice.
Chapman and Hall/CRC, Boca Raton, FL.

\hang
Sawyer, N. (2012).
Basel III: Addressing the Challenges of Regulatory Reform.
Risk Books, London.

\hang
Shorack, G.R. (1972).
Functions of order statistics.
Annals of Mathematical Statistics, 43, 412--427.

\hang
Su, J. (2016).
Multiple Risk Factors Dependence Structures with Applications to Actuarial Risk Management. Ph.D.~Thesis, York University, Ontario, Canada.

\hang
Su, J. and Furman, E. (2017). A form of multivariate Pareto
distribution with applications to financial risk measurement.
ASTIN Bulletin, 47, 331--357.

\hang
Tse, S.M. (2009).
On the cumulative quantile regression process.
Mathematical Methods of Statistics, 18, 270--279.

\hang
Tse, S.M. (2015).
The cumulative quantile regression function with censored and truncated response.
Mathematical Methods of Statistics, 24, 147--155.

\hang
van Zwet, W.R. (1980).
A strong law for linear functions of order statistics.
Annals of Probability, 8, 986--990.

\hang
von Neumann, J. and Morgenstern, O. (1944).
Theory of Games and Economic Behavior.
Princeton University Press, Princeton, NJ.

\hang
Wang, S. (1995).
Insurance pricing and increased limits ratemaking by proportional hazards transforms.
Insurance: Mathematics and Economics, 17, 43--54.

\hang
Yang, S.S. (1981).
Linear combinations of concomitants of order statistics with application to testing and estimation.
Annals of the Institute of Statistical Mathematics, 33, 463--470.

\hang
Zitikis, R. and Gastwirth, J.L. (2002).
Asymptotic distribution of the S-Gini index.
Australian and New Zealand Journal of Statistics, 44, 439--446.

\end{document}